\newtheorem{theorem}{Theorem}
\theoremstyle{plain}
\newtheorem{conjecture}{Conjecture}
\newtheorem{corollary}{Corollary}
\newtheorem{lemma}{Lemma}
\newtheorem{proposition}{Proposition}
\newtheorem{remark*}{Remark}
\DeclareMathOperator{\ch}{ch}
\DeclareMathOperator{\p}{p}
\DeclareMathOperator{\AT}{AT}
\numberwithin{equation}{section}
\begin{document}
\title[Graph polynomials and paintability of plane graphs]{Graph polynomials and paintability of plane graphs}

\author{Jaros\l aw Grytczuk}
\address{Faculty of Mathematics and Information Science, Warsaw University
	of Technology, 00-662 Warsaw, Poland}
\email{j.grytczuk@mini.pw.edu.pl}

\author{Stanislav Jendrol'}
\address{Institute of Mathematics, Faculty of Science, P.J. \v {S}af\'{a}rik University in Ko\v {s}ice, Slovakia}
\email{stanislav.jendrol@upjs.sk}

\author{Mariusz Zaj\k {a}c}
\address{Faculty of Mathematics and Information Science, Warsaw University
	of Technology, 00-662 Warsaw, Poland}
\email{m.zajac@mini.pw.edu.pl}

\thanks{Supported by the Polish National Science Center, Grant Number: NCN 2017/26/D/ST6/00264, and by the Slovak Research and Development Agency under the contract No: APVV-19-0153.}

\begin{abstract} There exists a variety of coloring problems for plane graphs, involving vertices, edges, and faces in all possible combinations. For instance, in the \emph{entire coloring} of a plane graph we are to color these three sets so that any pair of adjacent or incident elements get different colors. We study here some problems of this type from algebraic perspective, focusing on the \emph{facial} variant. We obtain several results concerning the \emph{Alon-Tarsi number} of various graphs derived from plane embeddings. This allows for extensions of some previous results for \emph{choosability} of these graphs to the game theoretic variant, know as \emph{paintability}. For instance, we prove that every plane graph is facially entirely \emph{$8$-paintable}, which means (metaphorically) that even a color-blind person can facially color the entire graph form lists of size $8$. 
\end{abstract}
\maketitle

\section{Introduction}
There exist a variety of coloring problems involving diverse combinations of vertices, edges, and faces of plane graphs. For instance, in 1965 Ringel \cite{Ringel} proved that \emph{six} colors are sufficient to color the vertices and the faces of any plane triangulation so that any pair of adjacent or incident elements get different colors. He also asked if this is true for general plane graphs and the affirmative answer was provided by Borodin \cite{Borodin Ringel} (see \cite{Borodin Survey}). If we additionally introduce edges to the game, then there is no finite bound on the number of colors since plane graphs may have vertices with arbitrarily large degree. However, if we restrict to \emph{facially adjacent} edges, then it can be proved that \emph{eight} colors are already sufficient, as demonstrated by Fabrici, Jendrol', and Voigt in \cite{FabriciJV} (see \cite{CzapJendrol}).

In order to conveniently discuss problems of the above type we introduce the following notation. Let $G$ be a \emph{plane graph}, that is, a fixed embedding of a planar graph into the plane with no pairs of crossing edges. We shall assume throughout the paper that our plane graphs are \emph{simple} (no loops and multiple edges), and moreover, all faces are bounded by simple cycles. By $V,E$, and $F$ we denote traditionally the set of vertices, edges, and faces of $G$, respectively.

Let $G_v$ denote the planar graph whose plane embedding is $G$. Also, let $G_f$ denote the graph whose vertices are faces of $G$ with two vertices $x,y\in F$ adjacent in $G_f$ when they share an edge in $G$. Furthermore, denote by $B_{vf}$ the bipartite graph whose bipartition classes are $V$ and $F$ with $x\in V$ adjacent to $y\in F$ when $x$ is incident to $y$ in $G$. Finally, let us define a graph $G_{vf}$ by $G_{vf}=G_v\cup G_f\cup B_{vf}$. Then the Ringel-Borodin theorem mentioned above can be shortly expressed as:
$$\chi(G_{vf})\leqslant 6.$$

Let $G_e$ denote the line graph of $G_v$. Similarly we may define bipartite graphs $B_{ve}$ and $B_{ef}$ on bipartition classes $V$ and $E$, and $E$ and $F$, respectively, with adjacency relation induced by the incidence relation between the corresponding elements in the plane graph $G$. Also, in analogy to the graph $G_{vf}$, we may similarly define graphs $G_{ve}$, $G_{ef}$, or even $G_{vef}$, where the last is the \emph{entire graph} of $G$: $$G_{vef}=G_v\cup G_e\cup G_f\cup B_{ve}\cup B_{vf}\cup B_{ef}.$$

In the present paper we consider coloring of the above graphs from algebraic perspective. We shall however concentrate on the \emph{facial} variant. This means that instead of the whole line graph $G_e$ we consider only its subgraph $\overline{G}_e$ restricted to \emph{facially adjacent} edges, that is, pairs of edges sharing a vertex and a face of $G$. Notice that this definition coincides with the traditional one (based on facial walks) for graphs we considered here (all faces are simple cycles). With this restriction we consider facial analogs of previously defined graphs which we denote as $\overline{G}_{ve}, \overline{G}_{ef}$, and $\overline{G}_{vef}$. For instance, $\overline{G}_{ve}=G_v\cup \overline{G}_e\cup B_{ve}$. In particular, we may state the above mentioned result of Fabrici, Jendrol', and Voigt as: $$\chi (\overline{G}_{vef})\leqslant 8.$$

Recall that the \emph{choice number} of a graph $G$, denoted by $\ch (G)$, is the least integer $k$ such that the vertices of $G$ can be properly colored even if the color of each vertex $v$ is restricted to an arbitrary \emph{list} $L(v)$ of $k$ colors assigned to $v$. Thomassen \cite{Thomassen 5} proved that every planar graph $G$ satisfies $\ch (G)\leqslant 5$, while Voigt \cite{Voigt} found a non-$4$-choosable example. Wang and Lih \cite{WangLih} proved that every plane graph $G$ satisfies $\ch (G_{vf})\leqslant 7$, but it is not known if this bound is optimal.

Given a graph $G$, let $P_G$ be the \emph{graph polynomial} of $G$ defined by
\begin{equation}
P_G=\prod_{xy\in E}(x-y).
\end{equation}
We consider $P_G$ as a polynomial over the field $\mathbb{R}$ of real numbers. The \emph{Alon-Tarsi number} of $G$, denoted by $\AT (G)$, is the least integer $k$ such that $P_G$ contains a non-vanishing monomial whose degree in each variable is at most $k-1$. The celebrated Combinatorial Nullstellensatz of Alon \cite{AlonCN} (see also \cite{AlonTarsi}) implies that every graph $G$ satisfies
$$\ch (G)\leqslant \AT(G).$$

Schauz \cite{Schauz} extended this result to the game theoretic variant of the choice number $\ch(G)$, known as the \emph{painting number} $\p(G)$. It was introduced independently by Schauz \cite{Schauz Paint Correct} and Zhu \cite{Zhu Online}. We postpone a precise definition of $\p (G)$ to the penultimate section, where we also give a refined proof of Schauz's theorem. Roughly speaking, a graph $G$ satisfies $\p (G)\leqslant k$, or is \emph{$k$-paintable}, if a color-blind \emph{Painter} can color the vertices of $G$ from any lists of size $k$ revealed sequentially by \emph{Lister}. In each round of the play Lister picks a set of vertices $S$ containing a certain color in their lists, and then Painter colors a subset of $S$ properly by this color. It is clear that every graph satisfies $\ch (G)\leqslant \p (G)$, but, as proved by Schauz \cite{Schauz}, it also holds that $$\p (G)\leqslant \AT (G).$$

We obtain several results on the Alon-Tarsi number of various combined plane graphs, extending thereby many previous results concerning facial choosability (see \cite{FabriciJV}). For instance, we prove that for every plane graph $G$ we have $$\AT (\overline{G}_{vef})\leqslant 8,$$which implies that every plane graph is facially entirely $8$-paintable.
 
\section{The polynomial method and graph choosability}
In this section we state some basic notions and results that we will use later on.

\subsection{Graph polynomials}

Let $G$ be a simple graph on the set of vertices $V=\{x_1,x_2,\dots,x_n\}$. Let $P_G$ be the \emph{graph polynomial} of $G$, defined by
\begin{equation}\label{Graph Polynomial}
P_G=\prod_{x_ix_j\in E}(x_i-x_j).
\end{equation}
We identify symbols denoting vertices of $G$ with variables of $P_G$. We will consider $P_G$ as a polynomial over the field $\Bbb R$ of real numbers. Notice that $P_G$ is defined uniquely up to the sign, depending on the choice between the two possible expressions, $(x_i-x_j)$ or $(x_j-x_i)$, representing the edge $x_ix_j$ in the product (\ref{Graph Polynomial}). We shall assume that this choice is fixed. Notice also that $P_G$ is a \emph{uniform} polynomial, which means that all monomials in $P_G$ have the same total degree (equal to the number of edges of $G$).

In the process of expanding the polynomial $P_G$, one creates monomials by picking one variable from each factor $(x_i-x_j)$. Every such monomial corresponds to the unique orientation of the edges of $G$ obtained by directing the edge $x_ix_j$ \emph{towards} the picked variable. In this way the degrees of variables in the monomial coincide with the in-degrees of the vertices in the corresponding orientation.

Let $\mathcal{M}_G$ denote the multi-set of all monomials arising in this way. So, the cardinality of $\mathcal{M}_G$ is equal to $2^m$, where $m=|E(G)|$, and the multipicity of each monomial $M$ is equal to the number of orientations of $G$ sharing the same in-degree sequence (corresponding to the degrees of variables in $M$). The \emph{sign} of a monomial $M\in \mathcal{M}_G$ is the product of signs of all variables picked to form $M$. The \emph{coefficient} of a monomial $M$ in $P_G$, denoted as $c_P(M)$, is the sum of signs of all copies of $M$ in $\mathcal{M}_G$. A monomial $M$ is called \emph{non-vanishing} in $P_G$ if $c_P(M)\neq 0$.

\subsection{Combinatorial Nullstellensatz}

For a monomial $M$, let $\deg_{x_i}(M)$ denote the degree of the variable $x_i$ in $M$. The \emph{Alon-Tarsi number} of a graph $G$, denoted by $\AT (G)$, is the least integer $k$ such that there exists a non-vanishing monomial $M$ in $P_G$ satisfying $\deg_{x_i}(M)\leqslant k-1$ for every variable $x$.

The following famous result of Alon \cite{AlonCN} links graph polynomials to graph choosability.

\begin{theorem}[Combinatorial Nullstellensatz \cite{AlonCN}] Let $P$ be a polynomial in $\mathbb F[x_1,x_2,\dots,x_m]$ over any field $\mathbb F$. Suppose that there is a non-vanishing monomial $x_1^{k_1} x_2^{k_2}\cdots x_m^{k_m}$ in $P$ whose degree is equal to the degree of $P$. Then, for arbitrary sets $A_i\subseteq \mathbb F$, with $|A_i|=k_i+1$, there is a choice of elements $a_i\in A_i$ such that $P(a_1,a_2,\dots,a_m)\neq 0$.
\end{theorem}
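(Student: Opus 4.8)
The plan is to argue by contradiction: suppose that $P(a_1,\dots,a_m)=0$ for every choice of $a_i\in A_i$. For each index $i$ set $g_i(x_i)=\prod_{a\in A_i}(x_i-a)$, a monic polynomial of degree $k_i+1$ that vanishes exactly on $A_i$. Writing $g_i(x_i)=x_i^{k_i+1}-r_i(x_i)$ with $\deg r_i\leqslant k_i$ yields the pointwise identity $a^{k_i+1}=r_i(a)$ valid for every $a\in A_i$, which is the relation I want to exploit.

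First I would reduce $P$ to a polynomial $\overline{P}$ satisfying $\deg_{x_i}\overline{P}\leqslant k_i$ for all $i$, by repeatedly replacing any occurrence of $x_i^{k_i+1}$ with $r_i(x_i)$ until every variable falls below its threshold. Because this substitution encodes precisely the relation forced on the set $A_i$, the polynomials $\overline{P}$ and $P$ take equal values at every point of the grid $A_1\times\cdots\times A_m$; in particular $\overline{P}$ vanishes on the whole grid by the contradiction hypothesis.

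The key step is to show that this reduction fixes the coefficient of the top monomial $x_1^{k_1}\cdots x_m^{k_m}$. Each substitution trades a factor $x_i^{k_i+1}$ of degree $k_i+1$ for $r_i(x_i)$ of degree at most $k_i$, and so strictly lowers the total degree of the monomial being rewritten; hence throughout the procedure degrees only decrease. Since $\deg(x_1^{k_1}\cdots x_m^{k_m})=\sum k_i=\deg P$, no monomial of $P$ can be rewritten into the top monomial, as that would require a source of strictly larger total degree, of which $P$ has none. Meanwhile the top monomial is already reduced and thus passes through untouched, and any other monomial of degree $\deg P$ necessarily has some $\deg_{x_i}>k_i$ and is therefore pushed to strictly smaller degree. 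Its coefficient in $\overline{P}$ thus equals its nonzero coefficient in $P$, so $\overline{P}\neq 0$.

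Finally I would invoke the multivariate root lemma: a nonzero polynomial whose degree in $x_i$ is at most $k_i$ for each $i$ cannot vanish on all of $A_1\times\cdots\times A_m$ when $|A_i|=k_i+1$. This is proved by induction on $m$, expanding $\overline{P}$ in powers of $x_m$ and using that a univariate polynomial of degree at most $k$ with $k+1$ distinct roots vanishes identically; fixing the first $m-1$ coordinates inside their sets forces each coefficient to vanish on the smaller grid, whence all coefficients vanish by induction. Applied to $\overline{P}$ this contradicts $\overline{P}\neq 0$ and completes the argument. The main obstacle is the degree bookkeeping of the previous paragraph, namely verifying that the rewriting neither destroys nor spuriously generates the top monomial, which is exactly where the hypothesis $\deg(x_1^{k_1}\cdots x_m^{k_m})=\deg P$ is essential.
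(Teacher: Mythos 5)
The paper does not prove this statement at all---it is quoted as a black box from Alon's paper \cite{AlonCN}---so there is nothing internal to compare your argument against. On its own terms your proof is correct and is essentially the standard argument: assume $P$ vanishes on the grid $A_1\times\cdots\times A_m$, reduce $P$ modulo the relations $x_i^{k_i+1}=r_i(x_i)$ coming from $g_i(x_i)=\prod_{a\in A_i}(x_i-a)$, observe that the reduction preserves values on the grid, and then invoke the elementary lemma that a nonzero polynomial with $\deg_{x_i}\leqslant k_i$ cannot vanish on a grid of size $\prod(k_i+1)$. The one step that genuinely needs care---that the coefficient of $x_1^{k_1}\cdots x_m^{k_m}$ survives the reduction---you handle correctly: each rewrite strictly lowers total degree, so no monomial can be rewritten \emph{into} the top monomial (that would require a source of total degree exceeding $\deg P$), the top monomial itself is already reduced, and any other monomial of maximal total degree has some $\deg_{x_i}>k_i$ and is pushed strictly below degree $\deg P$. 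This is precisely where the hypothesis $\deg(x_1^{k_1}\cdots x_m^{k_m})=\deg P$ enters, and you identify that correctly. The concluding vanishing lemma by induction on the number of variables is also standard and correctly sketched. In short: a complete and correct proof of a result the paper only cites.
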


Since $P_G$ is uniform, we get immediately that every graph $G$ satisfies $$\ch (G)\leqslant \AT(G).$$

We will make a frequent use of the following simple observation. Let $G$ be a graph whose edges are split into two disjoint subsets:
$$E(G)=A\cup B.$$
Then the graph polynomial $P_G$ can be written as the product of two corresponding polynomials:
$$P_G=\prod_{x_ix_j\in E(G)}(x_i-x_j)=\prod_{x_ix_j\in A}(x_i-x_j)\prod_{x_ix_j\in B}(x_i-x_j)=P_AP_B.$$Suppose now, that $M_A$ and $M_B$ are non-vanishing monomials with the least possible degrees in the polynomials $P_A$ and $P_B$, respectively. Then the monomial $M=M_AM_B$ appears in the multiset of monomials $\mathcal{M}_G$. Moreover, the maximum degree of a variable in $M$ is the sum of maximum degrees of variables in $M_A$ and $M_B$. So, if one could only prove that $M$ is non-vanishing, then the following bound would follow:
$$\AT (G)\leqslant \AT (A)+\AT (B) -1.$$
Therefore, to establish an upper bound on the Alon-Tarsi number of $G$ one may try to split it into a union of two subgraphs with low Alon-Tarsi numbers, and try to prove that the monomial $M=M_AM_B$ is non-vanishing.

\subsection{Bipartite graphs}
We start with the following simple result for graph polynomials of bipartite graphs.

\begin{proposition}\label{Proposition Bipartite Polynomials}
	Let $G$ be a bipartite graph with bipartition classes $X$ and $Y$. Then every monomial $M\in \mathcal{M}_G$ is non-vanishing. 
\end{proposition}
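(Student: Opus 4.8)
The plan is to show that for a bipartite graph every orientation contributing to a fixed monomial $M$ carries the same sign, so that no cancellation can occur in the coefficient $c_P(M)$. Recall that a copy of a monomial in $\mathcal{M}_G$ is obtained by picking, from each factor, one of its two variables, and that the sign of this copy is the product of the signs of the picked variables. First I would fix the sign convention in the product (\ref{Graph Polynomial}) so that every edge joining $x\in X$ to $y\in Y$ is written as $(x-y)$, with the variable from $X$ appearing first. This is harmless: altering the convention on a single edge multiplies $P_G$ by $-1$ and hence flips the sign of every coefficient simultaneously, so the property of being non-vanishing does not depend on the chosen convention.

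With this convention, picking the $X$-variable from a factor $(x-y)$ contributes the sign $+1$ and orients the edge towards $x$, while picking the $Y$-variable contributes $-1$ and orients the edge towards $y$. Consequently, the sign of the orientation $D$ associated with a given monomial copy equals $(-1)^{e_{XY}(D)}$, where $e_{XY}(D)$ denotes the number of edges directed from $X$ to $Y$ in $D$.

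The key observation is that, for a bipartite graph, the quantity $e_{XY}(D)$ is completely determined by the monomial $M$ and not by the particular orientation realizing it. Indeed, an edge directed from $X$ to $Y$ is exactly an edge contributing to the in-degree of its endpoint in $Y$, so $e_{XY}(D)=\sum_{y\in Y}\deg^{-}_{D}(y)$. Since the in-degree $\deg^{-}_{D}(y)$ equals the exponent $\deg_{y}(M)$ of the variable $y$ in $M$, we obtain $e_{XY}(D)=\sum_{y\in Y}\deg_{y}(M)$, a number depending only on $M$. Hence all copies of $M$ in $\mathcal{M}_G$ share the same sign $(-1)^{\sum_{y\in Y}\deg_{y}(M)}$.

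Therefore $c_P(M)$ is, up to this common sign, simply the number of orientations of $G$ whose in-degree sequence corresponds to $M$; as this count is a positive integer (there is at least one such orientation, namely any one producing $M$, since $M\in\mathcal{M}_G$), we conclude $c_P(M)\neq 0$. The only point requiring care is the bipartiteness-driven identity $e_{XY}(D)=\sum_{y\in Y}\deg_{y}(M)$, which is precisely what rules out the mixed-sign cancellation that can occur in non-bipartite graphs; everything else is bookkeeping of signs.
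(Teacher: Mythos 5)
Your proof is correct and follows essentially the same route as the paper: with the convention that each edge is written as $(x-y)$ for $x\in X$, $y\in Y$, the sign of any copy of $M$ is $(-1)^{\sum_{y\in Y}\deg_y(M)}$, which depends only on $M$, so no cancellation occurs. The paper states this directly in terms of the exponents $b_1,\dots,b_s$ of the $Y$-variables; your reformulation via the count $e_{XY}(D)$ of edges oriented from $X$ to $Y$ is just a different bookkeeping of the identical observation.
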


\begin{proof}
	Let $X=\{x_1,x_2,\dots,x_r\}$, $Y=\{y_1,y_2,\dots,y_s\}$, and let
	\begin{equation}
	P_G=\prod_{x_iy_j\in E(G)}(x_i-y_j).
	\end{equation}
	Any monomial $M\in\mathcal{M}_G$ has the form:
	$$M=x_1^{a_1}x_2^{a_2}\cdots x_r^{a_r}y_1^{b_1}y_2^{b_2}\cdots y_s^{b_s},$$
	with $a_i\geqslant0$ and $b_j\geqslant0$ for all $i=1,2,\dots,r$ and $j=1,2,\dots, s$.
	Notice that in forming monomials for $P_G$, the variables form $X$ are always positive, while the variables form $Y$ are always negative. It follows that the sign of $M$ is equal to $(-1)^{b_1+\cdots +b_s}$ and is therefore the same for each copy of $M$ in $\mathcal{M}_G$. So, $M$ is non-vanishing (over $\mathbb R$), as asserted.
\end{proof}

The above proposition gives immediately the following well-known result of Alon and Tarsi \cite{AlonTarsi}.
\begin{corollary}[Alon and Tarsi \cite{AlonTarsi}]\label{Corollary AT Bipartite}
	Let $G$ be a bipartite graph and let $d$ be the least integer such that $G$ has an orientation with all in-degrees at most $d$. Then $\AT (G)= d+1$. In particular, every planar bipartite graph $G$ satisfies $\AT (G)\leqslant 3$.
\end{corollary}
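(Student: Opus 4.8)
The plan is to split the statement into two parts: the exact formula $\AT(G)=d+1$, which will follow almost immediately from Proposition~\ref{Proposition Bipartite Polynomials}, and the bound $\AT(G)\le 3$ for planar bipartite graphs, which I will reduce to producing an orientation of small maximum in-degree.

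For the formula, I would exploit the correspondence recorded above between the monomials of $\mathcal{M}_G$ and the orientations of $G$: a monomial $M$ has $\deg_{x_i}(M)$ equal to the in-degree of $x_i$ in its associated orientation, so the largest variable degree occurring in $M$ equals the maximum in-degree of that orientation. By definition $\AT(G)$ is the least $k$ for which some non-vanishing monomial has all variable degrees at most $k-1$. Since $G$ is bipartite, Proposition~\ref{Proposition Bipartite Polynomials} guarantees that \emph{every} monomial of $\mathcal{M}_G$ is non-vanishing, so the non-vanishing requirement is automatic, and $\AT(G)$ is simply the least $k$ such that some orientation has all in-degrees at most $k-1$. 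Writing $d$ for the least attainable maximum in-degree, an orientation realizing $d$ yields a (non-vanishing) monomial with all degrees $\le d$, whence $\AT(G)\le d+1$; conversely a non-vanishing monomial witnessing $\AT(G)$ yields an orientation with maximum in-degree $\le \AT(G)-1$, so by minimality of $d$ we get $d\le \AT(G)-1$. Combining the two inequalities gives $\AT(G)=d+1$.

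For the ``in particular'' clause it then suffices to show $d\le 2$ for every planar bipartite $G$, i.e. that $G$ admits an orientation with all in-degrees at most $2$. Here I would invoke the classical orientation criterion (Hakimi): a graph has an orientation with maximum in-degree at most $r$ if and only if every subgraph $H$ satisfies $|E(H)|\le r\,|V(H)|$. This criterion is itself proved by a short Hall-type argument: form the bipartite incidence graph between the edges of $G$ and $r$ copies (``slots'') of each vertex, joining each edge to the slots of its two endpoints; a matching saturating all edges assigns every edge to a chosen endpoint, read as its head, so that each vertex receives at most $r$ edges. Hall's condition for such a matching reads $|S|\le r\,|V(S)|$ for every set $S$ of edges, where $V(S)$ is the set of their endpoints, and this follows from the hypothesis applied to the subgraph induced on $V(S)$, since $S\subseteq E(G[V(S)])$.

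It then remains to verify the hypothesis with $r=2$. Every subgraph $H$ of a planar bipartite graph is again simple, planar and bipartite, so the Euler-formula bound gives $|E(H)|\le 2|V(H)|-4$ whenever $|V(H)|\ge 3$, while the cases $|V(H)|\le 2$ are trivial; in all cases $|E(H)|\le 2|V(H)|$. Hence $G$ admits an orientation with all in-degrees at most $2$, so $d\le 2$ and $\AT(G)\le 3$. The only genuinely non-routine ingredient is the orientation criterion: once it is available, the planar bipartite bound is just the standard sparsity estimate, and the equality $\AT(G)=d+1$ is essentially a restatement of Proposition~\ref{Proposition Bipartite Polynomials}. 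The main point to handle carefully is the treatment of small and disconnected subgraphs in the edge-count inequality.
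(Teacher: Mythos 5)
Your proposal is correct and follows exactly the route the paper intends: the paper gives no explicit proof, stating only that the corollary follows immediately from Proposition~\ref{Proposition Bipartite Polynomials} via the monomial--orientation correspondence, which is precisely your first paragraph. Your second half merely fills in the (standard, correctly handled) details of Hakimi's orientation criterion and the $|E|\leqslant 2|V|-4$ sparsity bound that the paper leaves implicit.
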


\subsection{Planar graphs}
It is not hard to demonstrate that every planar graph $G$ satisfies $\AT (G)\leqslant 6$. This follows from the following more general fact.

\begin{proposition}\label{Proposition AT Degenerate}
Let $G$ be a graph on $n$ vertices having an acyclic orientation with in-degree sequence $(d_1,d_2,\dots, d_n)$. Then the monomial $M=x_1^{d_1}x_2^{d_2}\cdots x_n^{d_n}$ is non-vanishing in $P_G$, and its coefficient satisfies $c_{P_G}(M)=\pm1$.
\end{proposition}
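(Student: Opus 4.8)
The plan is to show that the monomial $M$ arises from exactly one orientation of $G$, namely the prescribed acyclic one, so that its multiplicity in $\mathcal{M}_G$ equals $1$ and the conclusion $c_{P_G}(M)=\pm1$ follows immediately. Recall from the discussion preceding the statement that a copy of $M$ in $\mathcal{M}_G$ corresponds to an orientation of $G$ in which the in-degree of each vertex $x_i$ equals $\deg_{x_i}(M)=d_i$. Hence $c_{P_G}(M)$ is a \emph{signed} count of the orientations of $G$ sharing the in-degree sequence $(d_1,\dots,d_n)$, and it suffices to prove that the acyclic orientation $D$ furnished by the hypothesis is the \emph{only} such orientation; non-vanishing and the value $\pm1$ are then automatic.

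To establish this uniqueness I would compare $D$ with an arbitrary orientation $D'$ having the same in-degree sequence. Let $R$ be the set of edges on which $D$ and $D'$ disagree, each edge carrying its $D$-orientation. At a vertex $v$, the edges entering $v$ in $D'$ are precisely the unreversed edges into $v$ together with the reversed edges out of $v$; imposing $d^-_{D'}(v)=d^-_D(v)$ for every $v$ forces the number of $R$-edges entering $v$ to equal the number of $R$-edges leaving $v$. Thus $R$ is a \emph{balanced} (Eulerian) subdigraph of $D$: within $R$ every vertex has equal in- and out-degree.

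The decisive step is the observation that a nonempty balanced digraph must contain a directed cycle: any vertex incident to an edge of $R$ has positive out-degree in $R$ (its out-degree equals its in-degree, which is positive), so a walk following $R$-edges can never get stuck and, by finiteness, must revisit a vertex, closing a directed cycle. Since $D$ is acyclic it contains no directed cycle, so $R$ must be empty and $D'=D$. This yields uniqueness, whence $M$ occurs exactly once in $\mathcal{M}_G$ and $c_{P_G}(M)=\pm1$, the sign being that of the single copy of $M$ produced by $D$.

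The only part requiring genuine care is the bookkeeping in the second paragraph — correctly expressing the in-degree of each vertex in $D'$ in terms of the reversed edge set $R$ and extracting the balance condition — together with making precise the (standard but worth stating) fact that a balanced digraph with at least one edge contains a directed cycle. I expect this balanced-subdigraph argument to be the main obstacle, and I emphasize that no cancellation among signed monomials is needed here: acyclicity rules out every competing orientation outright, so the coefficient is forced to be $\pm1$ rather than merely summing to it.
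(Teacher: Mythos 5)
Your proof is correct and follows essentially the same route as the paper: the monomial $M$ has multiplicity one in $\mathcal{M}_G$ because the given acyclic orientation is the only orientation of $G$ realizing the in-degree sequence $(d_1,\dots,d_n)$, so no cancellation can occur and $c_{P_G}(M)=\pm1$. The paper simply asserts this uniqueness in one line, whereas you supply the standard balanced-subdigraph argument (the reversed edges form an Eulerian subdigraph, which if nonempty would contain a directed cycle of $D$); this is a welcome addition, since it establishes the fact actually needed --- that no orientation \emph{at all}, acyclic or not, shares the in-degree sequence --- rather than only uniqueness among acyclic orientations.
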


\begin{proof}Every graph has only one acyclic orientation with fixed in-degree sequence. So, the monomial $M$ corresponding to this orientation is unique in the multiset $\mathcal{M}_G$ and therefore non-vanishing.
	\end{proof}

Since every planar graph $G$ has an acyclic orientation with maximum in-degree at most $5$, it follows that $\AT (G)\leqslant 6$. The following recent result of Zhu \cite{ZhuAT} is a far reaching strengthening of this observation. It also extends a famous theorem of Thomassen on $5$-choosability of planar graphs.

\begin{theorem}[Zhu \cite{ZhuAT}]\label{Theorem Zhu}
	Every planar graph $G$ satisfies $\AT (G)\leqslant 5$.
\end{theorem}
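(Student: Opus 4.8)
The plan is to reproduce Zhu's argument, which upgrades Thomassen's inductive proof of the $5$-choosability of planar graphs from colorings to orientations, carrying the graph-polynomial coefficient along at every step. The main tool is the orientation description of $\AT$: for an orientation $D$ of $G$ with in-degree sequence $(d_1,\dots,d_n)$, the classical Alon--Tarsi theorem \cite{AlonTarsi} identifies the coefficient $c_{P_G}(x_1^{d_1}\cdots x_n^{d_n})$, up to sign, with the difference $EE(D)-OE(D)$ between the numbers of even and odd spanning \emph{Eulerian} subdigraphs of $D$ (subsets of arcs having equal in- and out-degree at every vertex). Call $D$ an \emph{AT-orientation} when $EE(D)\neq OE(D)$. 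Then $\AT(G)\le k$ as soon as $G$ has an AT-orientation of maximum in-degree at most $k-1$, since such an orientation yields a non-vanishing monomial of $P_G$ with all variable-degrees at most $k-1$. So it suffices to exhibit, for every plane graph, an AT-orientation of maximum in-degree $4$.

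A preliminary reduction lets me assume $G$ is a plane triangulation. Indeed $\AT$ does not increase under edge deletion: writing $P_H=P_G\cdot R$ when $E(G)\subseteq E(H)$, if $x^{\mathbf a}$ is non-vanishing in $P_H$ with all exponents at most $k-1$, then extracting the coefficient of $x^{\mathbf a}$ in the product exhibits some $\mathbf b\le\mathbf a$ with $x^{\mathbf b}$ non-vanishing in $P_G$; since $P_G$ is uniform this $\mathbf b$ witnesses $\AT(G)\le k$. (This is the splitting observation of Section~2 read in one direction.) Hence I may add edges to make every bounded face a triangle and work with a $2$-connected near-triangulation.

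The core is then an orientation analogue of Thomassen's lemma, proved by induction on $|V(G)|$: if $G$ is a near-triangulation with outer cycle $C=v_1v_2\cdots v_p$, then $G$ has an AT-orientation $D$ with $d^-_D(v_1)=0$, $d^-_D(v_2)\le 1$, $d^-_D(v_i)\le 2$ for $3\le i\le p$, and $d^-_D(w)\le 4$ at every interior vertex $w$; applied to a triangulation this gives maximum in-degree $4$ and proves the theorem. The induction follows Thomassen's two cases. If $C$ has a chord $v_iv_j$, I cut $G$ along it into two near-triangulations meeting in the edge $v_iv_j$, apply the hypothesis to each with the chord endpoints in the distinguished roles, and glue the resulting orientations. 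If $C$ is chordless, I delete a suitable boundary vertex $v_p$, whose neighbours form a path that becomes part of the new outer boundary, apply the hypothesis to $G-v_p$, and then reinsert $v_p$ together with an orientation of its incident edges.

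The crux --- and the reason the choosability proof does not transcribe mechanically --- is that Thomassen only needs one admissible color at each step, whereas here a \emph{signed} count must be shown to remain nonzero. The device is to orient the reinserted vertex so that every arc incident to it either points away from it or comes from a vertex that is already a source; a source lies on no directed closed walk, so such arcs cannot occur in any nonempty Eulerian subdigraph, and therefore every Eulerian subdigraph of $D$ restricts to one of $D-v_p$, giving $EE(D)-OE(D)=EE(D-v_p)-OE(D-v_p)\neq0$ by induction (and analogously the chord is chosen never to enter an Eulerian subdigraph, so the two glued pieces contribute multiplicatively). I expect the genuine difficulty to lie not in this Eulerian bookkeeping but in arranging the orientation at $v_p$ (and along the chord) so that it \emph{simultaneously} kills all Eulerian subdigraphs through $v_p$ and respects the in-degree budgets $0,1,2,4$ at $v_p$ and at each of its neighbours; this is exactly where the asymmetric boundary budgets and a careful ordering of the neighbours of $v_p$ are consumed, and it is prudent to strengthen the inductive invariant (for instance to keep $|EE(D)-OE(D)|$ odd, or equal to $1$) so that the gluing step stays nonzero.
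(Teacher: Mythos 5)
First, a caveat: the paper does not prove this statement at all --- Theorem~\ref{Theorem Zhu} is imported from Zhu \cite{ZhuAT} as a black box, with only the remark that the proof follows Thomassen's induction ``though an unexpected twist appears at the final stage.'' So your reconstruction can only be judged against Zhu's published argument. Your scaffolding is sound: the identification of $c_{P_G}(x_1^{d_1}\cdots x_n^{d_n})$ with $\pm\bigl(EE(D)-OE(D)\bigr)$, the reduction to triangulations via monotonicity of $\AT$ under edge deletion (your coefficient-extraction argument for that is correct), and the Thomassen-style inductive statement with in-degree budgets $0,1,2,\dots,2,4$ are all the right skeleton.

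The gap is the crux you yourself flag, and the devices you offer do not close it --- in either case of the induction. In the vertex-deletion case the budgets force the orientation at $v_p$: the arcs to the interior-to-be neighbours $u_i$ can point away from $v_p$, but $v_{p-1}$ is a boundary vertex of $G$ whose in-degree budget of $2$ may already be saturated in the orientation of $G-v_p$, so the edge $v_{p-1}v_p$ must be oriented $v_{p-1}\to v_p$, and $v_{p-1}$ is not a source. Directed cycles $v_{p-1}\to v_p\to u_i\to\cdots\to v_{p-1}$ are therefore possible, nonempty Eulerian subdigraphs through $v_p$ cannot be excluded, and $EE-OE$ genuinely changes. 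The chord case fails for an analogous reason: the inductive hypothesis gives the chord endpoint $v_i$ in-degree $0$ only \emph{inside} $G_2$, while in $D_1$ it may have in-arcs and $v_j$ may have out-arcs, so a directed path from $v_j$ to $v_i$ inside $G_1$ can close a cycle through the chord, producing Eulerian subdigraphs of the glued orientation whose restrictions to the two pieces are not Eulerian; the product formula for $EE-OE$ is then unavailable. This is exactly the ``unexpected twist'' the paper alludes to: Zhu has to strengthen the inductive statement and analyse how the Eulerian count changes when the vertex is restored, rather than arrange for it not to change, and your fallback invariants ($|EE-OE|$ odd, or equal to $1$) are neither shown to survive these steps nor what Zhu actually uses. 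As written, the proposal is an accurate plan with an honest placeholder precisely where the theorem lives, so it is not yet a proof.
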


The proof has a similar structure to the elegant inductive argument of Thomassen, though an unexpected twist appears at the final stage. The following related results were recently obtained by a similar algebraic approach.

\begin{theorem}[Grytczuk and Zhu \cite{GrytczukZhu}]\label{Theorem Grytczuk and Zhu}
Every planar graph $G$ contains a matching $M$ such that $\AT (G-M)\leqslant 4$. In consequence, $G-M$ is $4$-choosable.
\end{theorem}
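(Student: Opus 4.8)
The plan is to follow the route opened by Thomassen for $5$-choosability and adapted to the Alon--Tarsi number by Zhu (Theorem~\ref{Theorem Zhu}), but to carry an auxiliary matching $M$ along the induction so that the in-degree budget of interior vertices can be pushed down from $4$ (which is what yields Zhu's bound $\AT\leqslant 5$) to $3$. Deleting one carefully chosen edge per inductive step will absorb the single unit of slack that separates $\AT\leqslant 5$ from $\AT\leqslant 4$, and the union of all these deleted edges will form the matching $M$.

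First I would reduce to plane triangulations, using that the Alon--Tarsi number is monotone under edge deletion. Indeed, for $e=x_1x_2$ we have $P_H=\pm P_{H-e}\,(x_1-x_2)$, so for every monomial $M$ the coefficients satisfy $c_{P_H}(M)=\pm\bigl(c_{P_{H-e}}(M/x_1)-c_{P_{H-e}}(M/x_2)\bigr)$; if $M$ is non-vanishing in $P_H$ with all degrees at most $k-1$, then one of $M/x_1,M/x_2$ (each of no larger maximum degree) is non-vanishing in $P_{H-e}$, whence $\AT(H-e)\leqslant\AT(H)$. Now, given a planar $G$, embed it and add chords inside non-triangular faces to obtain a plane triangulation $H\supseteq G$ on the same vertex set. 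If $H$ admits a matching $M_H$ with $\AT(H-M_H)\leqslant4$, then $M=M_H\cap E(G)$ is a matching of $G$, and $G-M$ is a spanning subgraph of $H-M_H$, so $\AT(G-M)\leqslant\AT(H-M_H)\leqslant4$ by monotonicity. Hence it suffices to treat triangulations.

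For triangulations I would prove the following strengthening by induction on the number of vertices, stated for \emph{near-triangulations} (two-connected plane graphs whose inner faces are all triangles). For such $G$ with outer cycle $v_1v_2\cdots v_p$ there exist a matching $M\subseteq E(G)$ with $v_1,v_2\notin V(M)$ and an orientation $D$ of $G-M$ such that $d^-(v_1)=0$, $d^-(v_2)\leqslant1$, every other boundary vertex has $d^-\leqslant2$, every interior vertex has $d^-\leqslant3$, and, most importantly, the corresponding monomial $\prod_v x_v^{d^-(v)}$ is \emph{non-vanishing} in $P_{G-M}$. Granting this, apply it to a triangulation with outer triangle $v_1v_2v_3$: one obtains a non-vanishing monomial of $P_{G-M}$ in which every variable has degree at most $3$, that is $\AT(G-M)\leqslant4$, and then $\ch(G-M)\leqslant\AT(G-M)\leqslant4$ gives the stated choosability.

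The induction itself splits into the two classical cases. If the outer cycle has a chord $v_iv_j$, I cut $G$ along it into two near-triangulations, apply the hypothesis to each with $v_i,v_j$ playing the two distinguished (in-degree $0$ and $\leqslant1$) roles, and glue; taking $v_i,v_j$ unmatched in both pieces keeps the union of the two matchings a matching, while the product-and-sign bookkeeping of the splitting observation preceding Proposition~\ref{Proposition Bipartite Polynomials} (together with Proposition~\ref{Proposition AT Degenerate}) propagates non-vanishing across the chord. If there is no chord, I process the boundary vertex adjacent to the distinguished pair, delete it, recurse, and then re-insert it, orienting its edges so as to keep $d^-(v_1)=0$; the in-degrees this adds to its former neighbours are the analogue of Thomassen's ``removing a forbidden colour'', and because the budget is now tight it is exactly here that a neighbour can be pushed to in-degree $4$ unless one incident edge is diverted into $M$. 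The crux of the whole argument lies in this last move: the diverted edges must, over the entire recursion, (i) remain pairwise disjoint and avoid every vertex that ever serves as a distinguished vertex, and (ii) be chosen so that each diversion preserves the non-vanishing of the monomial. These two demands pull against each other, since removing an edge simultaneously lowers an in-degree and perturbs the coefficient, so the principal difficulty is to prescribe the matching edges along the path of newly exposed boundary vertices while tracking the coefficient (kept nonzero in the spirit of Zhu's refinement of Proposition~\ref{Proposition AT Degenerate}) so that disjointness and non-vanishing hold simultaneously throughout.
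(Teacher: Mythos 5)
There is a genuine gap, and it sits exactly where the theorem's content lies. Note first that the paper offers no proof of this statement at all: it is quoted verbatim from the cited reference \cite{GrytczukZhu}, so there is no in-paper argument to compare against. Measured on its own terms, your write-up correctly identifies the strategy of the actual proof (reduction to near-triangulations, a Thomassen--Zhu style induction that carries a matching alongside an orientation, the chord/no-chord dichotomy), and your reduction to triangulations via monotonicity of $\AT$ under edge deletion is correct and cleanly argued. But you then declare the chordless case --- choosing the diverted edges so that, over the whole recursion, they stay pairwise disjoint, avoid every vertex that ever becomes distinguished, \emph{and} preserve the non-vanishing of the coefficient --- to be ``the principal difficulty'' and leave it unresolved. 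That step is not a detail to be filled in later; it is the entire theorem. Everything you do prove (monotonicity, the reduction, the shape of the inductive invariant) is routine, and what remains open is precisely what separates this result from Zhu's $\AT(G)\leqslant 5$.

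There is also an unexamined problem with the inductive invariant as you state it. In the chord case, the chord endpoints $v_i,v_j$ are ordinary boundary vertices of the piece $G_1$ containing $v_1,v_2$, so the hypothesis allows them in-degree up to $2$ there; in the piece $G_2$ they play the distinguished roles, contributing up to $0$ and $1$ respectively. After gluing, $v_j$ can reach in-degree $3$, violating the boundary budget of $2$ that your invariant promises for it as a non-distinguished boundary vertex of $G$. Zhu's proof of $\AT\leqslant 5$ faces the same arithmetic and resolves it by proving a differently shaped statement (removing certain boundary edges and reinstating them at the end --- the ``unexpected twist'' the paper alludes to), and the matching version requires a correspondingly refined invariant. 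As written, your induction does not close even at the level of degree bookkeeping, let alone at the level of coefficients.
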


\begin{theorem}[Kim, Kim, and Zhu \cite{KimKimZhu}]\label{Theorem Kim, Kim, Zhu}
	Every planar graph $G$ contains a forest $F$ such that $\AT (G-F)\leqslant 3$. In consequence, $G-F$ is $3$-choosable.
\end{theorem}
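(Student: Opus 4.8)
The plan is to reduce the whole statement to a purely structural claim about planar graphs and then let the algebra of Section~2 do the rest. Specifically, it suffices to produce a forest $F\subseteq E(G)$ for which $G-F$ is \emph{$2$-degenerate}, i.e. can be reduced to the empty graph by repeatedly deleting a vertex of degree at most $2$. Indeed, listing the vertices in the reverse order of such deletions and orienting every edge of $G-F$ towards its earlier endpoint yields an \emph{acyclic} orientation of $G-F$ in which every in-degree is at most $2$. By Proposition~\ref{Proposition AT Degenerate} the associated monomial $M=\prod_i x_i^{d_i}$ with all $d_i\leqslant 2$ is then non-vanishing in $P_{G-F}$, whence $\AT(G-F)\leqslant 3$; the concluding $3$-choosability follows at once from the Combinatorial Nullstellensatz, since $P_{G-F}$ is uniform. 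I would also first reduce to the case where $G$ is a plane triangulation: a forest $F'$ furnished for a triangulation $G'\supseteq G$ restricts to a forest $F=F'\cap E(G)$ with $G-F\subseteq G'-F'$, and $\AT$ is monotone under edge deletion (immediate from $P_{G'}=(x_a-x_b)P_{G'-e}$), so $\AT(G-F)\leqslant\AT(G'-F')\leqslant 3$.

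The heart of the matter is therefore the structural lemma: \emph{every planar graph decomposes into a forest and a $2$-degenerate graph}. One clean way to obtain it is via Gon\c{c}alves's theorem that every planar graph decomposes into a forest and an outerplanar graph, combined with the elementary fact that outerplanar graphs are hereditarily $2$-degenerate (every outerplanar graph has a vertex of degree at most $2$, and this passes to subgraphs). Taking $F$ to be the forest and $G-F$ the outerplanar part then finishes the argument. Alternatively one proves the weaker ``forest $+$ $2$-degenerate'' splitting directly, by analysing a minimal counterexample and discharging around a vertex of degree at most $5$, choosing at each step which incident edges are diverted into $F$ and which are retained.

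It is worth seeing why the cruder tools available do not already give the bound, as this pinpoints the obstacle. Using only that planar graphs have arboricity at most $3$, one may write $E(G)=F_1\cup F_2\cup F_3$ with each $F_i$ a forest, set $F=F_1$, and recall that $\AT(F_i)=2$ because forests are bipartite (Corollary~\ref{Corollary AT Bipartite}). The splitting observation of Section~2 would then give $\AT(G-F)\leqslant\AT(F_2)+\AT(F_3)-1=3$, \emph{provided} the product monomial $M_{F_2}M_{F_3}$ is non-vanishing in $P_{F_2\cup F_3}$. But although each factor is individually non-vanishing by Proposition~\ref{Proposition Bipartite Polynomials}, the union $F_2\cup F_3$ may contain $K_4$ (which is itself a union of two paths) and hence is only guaranteed $3$-degenerate, so the combined orientation need not be acyclic and the product need not survive; this route yields only $\AT(G-F)\leqslant 4$, the level of Theorem~\ref{Theorem Grytczuk and Zhu}. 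Getting down to $2$-degeneracy is exactly what forces the combined orientation to be acyclic and the product monomial to be non-vanishing.

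Consequently the step I expect to be the main obstacle is the structural lemma itself: an arbitrary arboricity decomposition is too crude, and one must choose $F$ so as to destroy every dense $K_4$-like obstruction while keeping $F$ acyclic. The delicate point in a direct discharging proof is that at a high-degree vertex several incident edges must be diverted into $F$ to cap its residual degree at $2$, and these diverted edges must be routed to \emph{distinct} components of the growing forest lest a cycle be created---precisely the interplay with planarity that produces the ``unexpected twist'' already seen in the proof of Theorem~\ref{Theorem Zhu}. Once the decomposition is secured, everything algebraic is immediate from Proposition~\ref{Proposition AT Degenerate}, and $\AT(G-F)\leqslant 3$ together with the Combinatorial Nullstellensatz completes the proof.
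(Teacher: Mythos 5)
There is a genuine gap, and it sits exactly where you yourself predicted: the structural lemma that every planar graph decomposes into a forest and a $2$-degenerate graph. All of the surrounding algebra is correct ($2$-degeneracy of $G-F$ gives an acyclic orientation with in-degrees at most $2$, hence $\AT(G-F)\leqslant 3$ by Proposition~\ref{Proposition AT Degenerate}; $\AT$ is monotone under edge deletion; the reduction to triangulations is sound), but the lemma itself is not available. Gon\c{c}alves's theorem states that every planar graph is the edge-disjoint union of \emph{two outerplanar graphs}; the ``forest $+$ outerplanar'' strengthening you invoke is, to the best of current knowledge, an open conjecture rather than a theorem, and even the weaker ``forest $+$ $2$-degenerate'' decomposition is not known to hold. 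The sketched discharging alternative is not a proof, and nothing is forced by counting (a forest has up to $n-1$ edges and a $2$-degenerate graph up to $2n-3$, comfortably exceeding $3n-6$), so the difficulty is genuinely structural. The paper itself signals this: immediately after Theorems~\ref{Theorem Grytczuk and Zhu} and~\ref{Theorem Kim, Kim, Zhu} it remarks that the only known proofs of these two choosability statements are algebraic. If your lemma were true and known, $G-F$ would be $3$-choosable by a one-line greedy argument, contradicting that remark.

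For comparison, the actual proof of Kim, Kim, and Zhu (which this paper does not reproduce; the theorem is quoted from \cite{KimKimZhu}) does not go through degeneracy at all. It is an induction on plane near-triangulations in the spirit of Thomassen's $5$-choosability argument and of Zhu's proof of Theorem~\ref{Theorem Zhu}: one strengthens the statement to a boundary condition, builds the forest $F$ edge by edge during the induction, and directly tracks a non-vanishing monomial of $P_{G-E(F)}$ (equivalently, an orientation with prescribed in-degrees whose Alon--Tarsi difference of even and odd Eulerian sub-digraphs is nonzero). The resulting orientation need not be acyclic, which is precisely why the conclusion $\AT(G-F)\leqslant 3$ is, at present, strictly weaker than $2$-degeneracy of $G-F$. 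Your reduction is therefore a valid implication in one direction, but it reduces the theorem to an open problem rather than to a known result.
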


Interestingly, the only known proofs of the above two choosability statements are algebraic, unlike it is for the theorem of Thomassen.

The following result was proved independently by Ellingham and Goddyn \cite{EllinghamGoddyn} (see also \cite{Alon Restricted}).

\begin{theorem}[Ellingham and Goddyn \cite{EllinghamGoddyn}]\label{Theorem AT Cubic Planar}
	Let $G$ be a $2$-connected cubic planar graph, and let $G_e$ denote the line graph of $G$. Then $\AT (G_e)\leqslant 3$. In particular, $G$ is $3$-edge choosable.
\end{theorem}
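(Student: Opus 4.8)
The plan is to work directly with the graph polynomial $P_{G_e}$ and to exploit the fact that $G_e$ is the line graph of a \emph{cubic} graph. Since $G$ is cubic, two edges of $G$ are adjacent in $G_e$ exactly when they share a (unique) vertex $v$, so the three edges incident to each $v$ span a triangle $T_v$ in $G_e$, and the edge set of $G_e$ is the disjoint union $\bigcup_{v\in V}E(T_v)$. Hence $P_{G_e}=\prod_{v\in V}P_{T_v}$ up to sign. As $G_e$ is $4$-regular with $|E(G_e)|=2|E(G)|$ and $P_{G_e}$ is uniform, the only monomial with every exponent at most $2$ whose total degree reaches $\deg P_{G_e}$ is $M=\prod_{e\in E}x_e^{2}$; so proving $\AT(G_e)\le 3$ amounts precisely to showing $c_{P_{G_e}}(M)\neq 0$.

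Next I would express this coefficient as a signed count of orientations. Expanding $\prod_v P_{T_v}$, a monomial arises by choosing an orientation of each triangle $T_v$, and collecting $M$ forces the in-degree of every vertex of $G_e$ to be $2$. Thus
$$c_{P_{G_e}}(M)=\sum_{D}\mathrm{sgn}(D),\qquad \mathrm{sgn}(D)=\prod_{v\in V}\mathrm{sgn}\bigl(D|_{T_v}\bigr),$$
where $D$ ranges over all \emph{Eulerian} orientations of $G_e$ (in-degree $=$ out-degree $=2$ everywhere) and $\mathrm{sgn}(D|_{T_v})$ is the monomial sign of the triangle orientation. The first reduction is an involution: if some $T_v$ is oriented cyclically (local in-degrees $(1,1,1)$), reversing its three edges produces another Eulerian orientation of opposite sign. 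Applying this at the first such $v$ cancels in pairs every Eulerian orientation possessing a cyclic triangle, leaving only those in which each $T_v$ is acyclic, i.e.\ has local in-degrees a permutation of $(0,1,2)$.

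I would then read off the survivors combinatorially. At each $v$ the edge with local in-degree $1$ also has local in-degree $1$ at its far end, so these edges form a perfect matching $PM$ of $G$, while $G-PM$ is a $2$-factor whose edges, oriented toward their local in-degree-$2$ end, decompose into directed cycles. Reversing one such directed cycle $C$ swaps the local in-degrees $0\leftrightarrow 2$ of its two edges at each of its vertices, transposing them in the ordering by in-degree and hence multiplying $\mathrm{sgn}(D)$ by $(-1)^{|C|}$. Consequently every $PM$ for which $G-PM$ contains an odd cycle contributes $0$ (pair the cycle-orientations by reversing that odd cycle), and the only survivors are the matchings with $G-PM$ a union of even cycles --- exactly the colour classes of the proper $3$-edge-colourings (Tait colourings) of $G$, which exist because $G$ is bridgeless cubic planar (Tait's theorem together with the Four Colour Theorem). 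Each such $PM$ then contributes $\varepsilon(PM)\,2^{k(PM)}$, where $k(PM)$ is the number of cycles of $G-PM$ and $\varepsilon(PM)=\pm1$.

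The hard part will be showing that the signs $\varepsilon(PM)$ agree across all Tait colourings, so that no cancellation occurs between different matchings; this is the one step where planarity is indispensable, since for general cubic graphs the conclusion is equivalent to an open case of the list-edge-colouring conjecture. Here I would invoke the plane embedding --- this is the essential insight of Ellingham and Goddyn: comparing any two Tait colourings through a sequence of Kempe swaps along alternating (necessarily even) cycles and using Euler's formula to check that each swap preserves the product $\prod_v\mathrm{sgn}(D|_{T_v})$, so that $\varepsilon(PM)$ depends only on a global parity invariant of the embedding. Granting this, $c_{P_{G_e}}(M)=\pm\sum_{PM}2^{k(PM)}\neq 0$, since at least one Tait colouring exists; therefore $\AT(G_e)\le 3$, and applying $\ch(G_e)\le\AT(G_e)$ to the line graph gives that $G$ is $3$-edge-choosable.
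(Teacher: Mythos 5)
The first thing to note is that the paper does not prove this statement at all: Theorem \ref{Theorem AT Cubic Planar} is imported from Ellingham and Goddyn \cite{EllinghamGoddyn} (see also \cite{Alon Restricted}), with only the remark that the proof relies on the Four Colour Theorem. So your proposal has to be measured against the published argument rather than against anything in this paper. Up to the final step your reconstruction is correct and is essentially that argument: the decomposition of $E(G_e)$ into the vertex-triangles $T_v$, the identification of the unique candidate monomial $\prod_e x_e^2$, the sign-reversing involution killing all orientations with a cyclically oriented triangle, the encoding of the survivors as a perfect matching $PM$ (the local in-degree-$1$ edges) together with an orientation of the $2$-factor $G-PM$, the factor $(-1)^{|C|}$ under reversal of a directed cycle $C$, and the resulting formula $c_{P_{G_e}}(M)=\sum_{PM}\varepsilon(PM)\,2^{k(PM)}$, summed over matchings whose complement is a union of even cycles (equivalently, colour classes of Tait colourings), are all sound.

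The genuine gap is exactly where you place it, but the repair you sketch does not work as stated. You need all the signs $\varepsilon(PM)$ to coincide, and you propose to get this by joining any two Tait colourings by Kempe swaps and checking that each swap preserves the sign. Kempe-equivalence of the Tait colourings of a planar cubic graph is itself a substantial theorem, not a routine consequence of Euler's formula; moreover a Kempe swap along a two-coloured cycle permutes colour classes, so it need not carry the particular matching $PM$ you are tracking to another single matching, and you would additionally have to show that $\varepsilon$ takes the same value on the three colour classes of one and the same colouring. The published route avoids all of this: one proves directly from the plane embedding (a classical local counting lemma going back to Vigneron, also underlying Penrose's formula) that the product over the vertices of the sign of the cyclic order in which the three colours occur is the same for every Tait colouring of a plane cubic graph, and that this product agrees with $\varepsilon(PM)$ up to a factor depending only on $G$ and the fixed sign conventions. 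That lemma is the content you still need to supply; everything else, including the appeal to the Four Colour Theorem for the existence of at least one Tait colouring, matches the known proof.
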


It is well known that $3$-colorability of cubic bridgeless planar graphs is equivalent to the Four Color Theorem. Unfortunately, the proof of the above result uses indirectly the validity of the Four Color Theorem in an essential way.

\section{The results}

Let $G$ be a simple connected plane graph whose faces are bounded by simple cycles. In this section we will present our results concerning various graphs derived from $G$, that is, graphs of the form $G_{\mathtt s}$ or $\overline{G}_{\mathtt s}$, where $\mathtt s$ denotes a string from the set $\{v,e,f,ve,vf,ef,vef\}$.

\subsection{Medial graphs} Recall that $\overline{G}_e$ is a graph obtained from the line graph of $G$ by restricting to facially incident edges. Another name for $\overline{G}_e$ is the \emph{medial graph} of $G$.

We will make use of the following result of Hladk\'{y}, Kr\'{a}l, and Schauz \cite{HladkyKS}, which constitutes an algebraic version of the famous theorem of Brooks \cite{Brooks}.

\begin{theorem}\label{Theorem AT Brooks} Let $G$ be a connected graph. If $G$ is neither a clique nor an odd cycle, then $\AT (G)\leqslant \Delta(G)$.
\end{theorem}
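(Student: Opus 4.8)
The plan is to reduce the statement to a question about orientations. Recall from the discussion preceding Proposition~\ref{Proposition AT Degenerate} that two orientations of $G$ with the same in-degree sequence differ by the reversal of a set of edges that is balanced (in-degree equal to out-degree) at every vertex, i.e. an Eulerian subdigraph, and that reversing $\ell$ edges multiplies the sign of the corresponding copy of the monomial by $(-1)^\ell$. Hence, fixing an orientation $D$ with in-degree sequence $(d_1,\dots,d_n)$, the coefficient of $M=x_1^{d_1}\cdots x_n^{d_n}$ in $P_G$ equals $\pm\bigl(EE(D)-OE(D)\bigr)$, where $EE(D)$ and $OE(D)$ count the Eulerian subdigraphs of $D$ of even and odd size (the empty one counting as even). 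So to prove $\AT(G)\le\Delta$ it suffices to exhibit an orientation with maximum in-degree at most $\Delta-1$ for which $EE(D)\ne OE(D)$. When $D$ is acyclic its only Eulerian subdigraph is the empty one, so $EE(D)-OE(D)=\pm1$; this is precisely Proposition~\ref{Proposition AT Degenerate}, and it will settle every case except the regular one.

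First I would dispose of the case in which $G$ has a vertex $r$ of degree strictly less than $\Delta$. Choosing a spanning tree rooted at $r$ and ordering the vertices $v_1,\dots,v_n=r$ so that each non-root vertex precedes its parent, I orient every edge from the lower to the higher index. This orientation is acyclic; each $v_i$ with $i<n$ has its parent as a later neighbor and therefore in-degree at most $\deg(v_i)-1\le\Delta-1$, while $r$ has in-degree $\deg(r)\le\Delta-1$ by hypothesis. By the reduction above, $\AT(G)\le\Delta$. The case $\Delta=2$ is then immediate: a connected graph that is neither a clique nor an odd cycle with $\Delta=2$ is a path (non-regular, already handled) or an even cycle, and orienting an even cycle consistently gives all in-degrees equal to $1$ and exactly two Eulerian subdigraphs, both of even size, so $EE-OE=2\ne0$.

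The main obstacle is the remaining case: $G$ is $\Delta$-regular with $\Delta\ge3$, connected, and not complete. Here acyclic orientations are useless, because any acyclic orientation of a $\Delta$-regular graph has a sink whose in-degree equals its degree $\Delta$; one is therefore forced to produce a \emph{cyclic} orientation and argue that its even and odd Eulerian subdigraphs fail to cancel. My approach would follow the structural step of Brooks' theorem: locate a vertex $v$ with two non-adjacent neighbors $a,b$ such that $G-\{a,b\}$ is connected (splitting into blocks first if $G$ has a cut vertex), and fix a Brooks ordering $v_1=a,\ v_2=b,\ \dots,\ v_n=v$ in which each of $v_1,\dots,v_{n-1}$ has a later neighbor. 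The low-to-high orientation then has in-degree at most $\Delta-1$ at every vertex except the sink $v$, whose in-degree is $\Delta$; reversing a single edge incident to $v$, say turning $a\to v$ into $v\to a$, lowers $v$ to in-degree $\Delta-1$ and raises $a$ only to $1$, so that all in-degrees now respect the bound.

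The delicate part, and the true crux of the theorem, is to show that the monomial of this orientation is non-vanishing. Since deleting the single reversed edge leaves an acyclic digraph, every nonempty Eulerian subdigraph must use $v\to a$ and hence corresponds to an increasing directed path from $a$ to $v$; the signed count thus reduces to $EE-OE=1-\sum_{P}(-1)^{|P|}$ over such paths, and the whole difficulty is to rule out the accidental cancellation making this quantity zero. I expect to handle it exactly as Hladk\'y, Kr\'al, and Schauz do, by choosing the reversed edge carefully and exploiting the fact that $a,b$ are non-adjacent sources and $v$ is a sink of $G-\{a,b\}$ in order to set up a sign-reversing involution on the path-indexed Eulerian subdigraphs with a single uncancelled fixed point. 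Carrying out this involution cleanly—rather than the orientation construction, which is routine—is where essentially all of the work lies.
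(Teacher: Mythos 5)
The first thing to note is that the paper does not prove this statement at all: Theorem~\ref{Theorem AT Brooks} is quoted from Hladk\'{y}, Kr\'{a}l, and Schauz \cite{HladkyKS} and used as a black box (via Corollary~\ref{Corollary Medial Brooks}), so there is no in-paper proof to compare yours against. Judged as a self-contained argument, your setup is sound: identifying the coefficient of the monomial of an orientation $D$ with $\pm\bigl(EE(D)-OE(D)\bigr)$ is the correct Alon--Tarsi reduction (though it is not actually stated in the discussion preceding Proposition~\ref{Proposition AT Degenerate}, which covers only the acyclic case), and your treatment of the non-regular case and of even cycles is complete and correct.

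The genuine gap is exactly where you say it is, and it is not a small one: for a $\Delta$-regular, connected, non-complete graph with $\Delta\geqslant 3$ you construct an orientation with in-degrees at most $\Delta-1$ but give no argument that $1-\sum_{P}(-1)^{|P|}\neq 0$; you simply announce that you ``expect to handle it exactly as Hladk\'{y}, Kr\'{a}l, and Schauz do.'' That non-cancellation claim \emph{is} the theorem --- everything before it is already contained in Proposition~\ref{Proposition AT Degenerate} --- so deferring it to the reference means the proposal establishes nothing beyond what the citation already provides. Moreover, the construction as you describe it is not obviously the right one on which to run such an argument: after reversing the single edge $av$, the second non-neighbour $b$ plays no role whatsoever, whereas in \cite{HladkyKS} both edges $va$ and $vb$ are reversed, so that $v$ has out-degree $2$, the non-adjacency of $a$ and $b$ is what pins their in-degrees at $1$, and the signed count runs over Eulerian subdigraphs assembled from pairs of paths rather than single paths; the non-vanishing is extracted from that richer structure. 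With only one reversed edge nothing in your sketch prevents $\sum_{P}(-1)^{|P|}$ from equalling $1$. Either reproduce the actual counting argument of \cite{HladkyKS} with their orientation, or simply cite the theorem as the paper does.
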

Since every medial graph is $4$-regular (and planar), we get immediately the following corollary extending a choosability result from \cite{FabriciJV}.

\begin{corollary}\label{Corollary Medial Brooks}
Every medial graph satisfies $\AT (\overline{G}_e)\leqslant 4$.
\end{corollary}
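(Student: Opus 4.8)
The plan is to apply the algebraic Brooks theorem (Theorem~\ref{Theorem AT Brooks}) directly, after checking that its hypotheses are satisfied. First I would record the two structural facts about $\overline{G}_e$ that make this possible: it is $4$-regular, so that $\Delta(\overline{G}_e)=4$, and it is a plane graph, being the medial graph of $G$ — obtained from the plane embedding of $G$ by placing a vertex on each edge and joining edges that are consecutive around a common face. In particular $\overline{G}_e$ is planar.

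Since Theorem~\ref{Theorem AT Brooks} requires a connected input, I would argue component-wise. Write $\overline{G}_e$ as the disjoint union of its connected components $H_1,\dots,H_t$ (for a connected plane graph $G$ the medial graph is in fact already connected, but we need not rely on this). Each $H_i$ inherits $4$-regularity from $\overline{G}_e$ and is again planar.

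The key step is to verify that no $H_i$ falls into either of the two exceptional cases excluded by Brooks' theorem. An odd cycle is $2$-regular, hence ruled out immediately by $4$-regularity. A clique that is $4$-regular is $K_5$; but $K_5$ is non-planar, contradicting the planarity of $H_i$. Thus each $H_i$ is a connected graph that is neither a clique nor an odd cycle, and Theorem~\ref{Theorem AT Brooks} gives $\AT(H_i)\leqslant\Delta(H_i)=4$.

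Finally, I would reassemble the components. As the variable sets of distinct components are disjoint, the graph polynomial factors as $P_{\overline{G}_e}=\prod_{i=1}^{t}P_{H_i}$, and the coefficient of a product of non-vanishing monomials, one taken from each factor, is the product of the individual coefficients and hence non-zero. Choosing in every $H_i$ a non-vanishing monomial with all variable degrees at most $3$ therefore yields a non-vanishing monomial of $P_{\overline{G}_e}$ with all degrees at most $3$, so $\AT(\overline{G}_e)\leqslant 4$. The argument is essentially routine; the only point that genuinely uses the geometry of the embedding is the exclusion of $K_5$ via planarity, which is where the (very mild) main obstacle lies.
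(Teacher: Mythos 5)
Your proposal is correct and follows the same route as the paper, which likewise deduces the bound from Theorem~\ref{Theorem AT Brooks} by observing that $\overline{G}_e$ is $4$-regular and planar; you merely make explicit the (routine) checks that the paper leaves implicit, namely the exclusion of the clique and odd-cycle cases and the reduction to connected components.
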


This can be improved when $G$ is a bipartite plane graph, as proved by Dross, Lu\v{z}ar, Macekov\'{a}, and Sot\'{a}k in \cite{DrossLMS}.

\begin{theorem}[Dross, Lu\v{z}ar, Macekov\'{a}, Sot\'{a}k \cite{DrossLMS}]\label{Theorem Medial Bipartite}
If $G$ is a plane bipartite graph, then $\AT (\overline{G}_e)\leqslant 3.$
\end{theorem}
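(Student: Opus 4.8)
The plan is to reduce the claim to the existence of a single non-vanishing monomial and to exploit that, for a bipartite $G$, every face of $G$ --- and hence every facial cycle of the medial graph $\overline G_e$ --- is even.

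First I would record the reduction. The polynomial $P_{\overline G_e}$ is uniform of degree $|E(\overline G_e)|=2|E(G)|$, while $\overline G_e$ has exactly $|E(G)|$ vertices; consequently any non-vanishing monomial with all variable-degrees at most $2$ must have every degree equal to $2$. Thus $\AT(\overline G_e)\leqslant 3$ is equivalent to the single statement that $\prod_v x_v^{2}$ is non-vanishing in $P_{\overline G_e}$, which in turn amounts to producing an Eulerian orientation $D$ of $\overline G_e$ (all in- and out-degrees equal to $2$) for which the signed count of Eulerian subdigraphs does not vanish. Such an orientation certainly exists: since $\overline G_e$ is a plane $4$-regular graph whose faces correspond to the vertices and faces of $G$, orienting every edge so that the face corresponding to a vertex of $G$ lies on its left turns each such facial cycle into a directed cycle and yields in-degree $2$ at every vertex.

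Next I would bring bipartiteness into play through the splitting observation of Section~2. Because $G$ is bipartite, all its faces, and hence all the facial cycles $C_f$ of $\overline G_e$ surrounding a face $f$ of $G$, are even. These cycles partition $E(\overline G_e)$, and each vertex of $\overline G_e$ lies on exactly two of them (the two faces of $G$ meeting along the corresponding edge). Orienting each $C_f$ consistently, its associated monomial is $\prod_{v\in C_f}x_v$, and multiplying over all $f$ returns exactly $\prod_v x_v^{2}$, since every vertex is covered twice. Equivalently, I would try to split $\overline G_e$ into two spanning even $2$-factors $A$ and $B$ (a proper $4$-edge-colouring of the medial graph); each is a disjoint union of even cycles, hence bipartite with an orientation of maximum in-degree $1$, so $\AT(A)=\AT(B)=2$ by Corollary~\ref{Corollary AT Bipartite}. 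The splitting observation would then give $\AT(\overline G_e)\leqslant \AT(A)+\AT(B)-1=3$, provided the product monomial $M_AM_B=\prod_v x_v^{2}$ is shown to be non-vanishing.

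The hard part is exactly this last non-vanishing step, and it is where evenness of the faces must really be used. The coefficient of $\prod_v x_v^{2}$ equals, up to sign, the difference between the numbers of even and odd Eulerian subdigraphs of $D$, and this is \emph{not} simply the number of Eulerian orientations: reversing the directed facial cycle around a vertex of $G$ of odd degree flips the sign, so genuine cancellation occurs as soon as $G$ has odd-degree vertices. My plan to control this is to show the signed count is forced to be nonzero by relating it to a manifestly positive quantity --- for instance a Tutte-type evaluation of $G$ counting consistent orientations or flows --- with the even facial cycles guaranteeing that the contributions organise into non-cancelling classes, in the spirit of Proposition~\ref{Proposition Bipartite Polynomials}. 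Failing a clean closed form, I would instead run an induction on $|E(G)|$, deleting an edge of $G$ and tracking the corresponding vertex-smoothing in the $4$-regular medial graph, using evenness of the faces to preserve the inductive hypothesis. Either way, proving that the odd facial cycles do not annihilate the coefficient is the crux; the decomposition and the degree bookkeeping above are routine once that is in hand.
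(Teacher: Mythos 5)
First, a point of orientation: the paper does not prove this statement at all --- it is imported verbatim from Dross, Lu\v{z}ar, Macekov\'{a} and Sot\'{a}k \cite{DrossLMS} and used as a black box (in Theorem \ref{Theorem ve Bipartite Triangulation}). So there is no in-paper proof to compare against; your proposal has to stand on its own, and it does not. Your reduction is correct and clean: since $\overline{G}_e$ is $4$-regular on $|E(G)|$ vertices and $P_{\overline{G}_e}$ is uniform of total degree $2|E(G)|$, the bound $\AT(\overline{G}_e)\leqslant 3$ is exactly the assertion that $\prod_v x_v^2$ is non-vanishing, i.e.\ that for some (equivalently any) Eulerian orientation $D$ the signed count $\mathrm{EE}(D)-\mathrm{EO}(D)$ of spanning Eulerian subdigraphs is nonzero. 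But that assertion \emph{is} the theorem, and at precisely this point you stop proving and start listing hopes: ``relating it to a manifestly positive quantity --- for instance a Tutte-type evaluation,'' ``failing a clean closed form, I would instead run an induction.'' Neither is carried out, and you yourself flag this as ``the crux.'' A proof with the crux missing is a gap, not a proof.

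Two of the concrete steps you do commit to are also shakier than you suggest. The ``equivalent'' splitting of $\overline{G}_e$ into two spanning even $2$-factors would require $2$-colouring the face-cycles $\{C_f\}$ so that each vertex of $\overline{G}_e$ lies on one cycle of each colour; since the two cycles through the vertex corresponding to an edge $e$ of $G$ are those of the two faces bordering $e$, this needs the dual $G^*$ to be bipartite, i.e.\ $G$ Eulerian --- not a consequence of $G$ being bipartite. More fundamentally, even granting two even $2$-factors $A$ and $B$ with $\AT(A)=\AT(B)=2$ via Corollary \ref{Corollary AT Bipartite}, the splitting device of Section 2 cannot close the argument here the way it does in Theorem \ref{Theorem AT Gve}: there the factorization $M=XYQ$ is forced to be unique because different factors saturate disjoint blocks of variables at their maximal degrees, whereas here $A$ and $B$ each contribute degree exactly $1$ to \emph{every} variable, so $\prod_v x_v^2$ factors as $M_AM_B$ in very many ways and nothing prevents the signed contributions from cancelling --- which, as you correctly note via the odd vertex-cycles, they genuinely threaten to do. So the approach is not merely one lemma short; the decomposition framework you chose has no mechanism to rule out cancellation, and the actual content of the Dross--Lu\v{z}ar--Macekov\'{a}--Sot\'{a}k result is exactly the non-cancellation argument you have left open.
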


Another improvement can be derived for triangulations directly from Theorem \ref{Theorem AT Cubic Planar}.

\begin{corollary}\label{Corollary Medial Triangulation}
Let $G$ be a plane triangulation. Then $\AT (\overline{G}_e)\leqslant 3$.
\end{corollary}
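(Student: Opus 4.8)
The plan is to reduce the statement to Theorem~\ref{Theorem AT Cubic Planar} by passing to the plane dual. The key observation is that the medial graph is a \emph{self-dual} construction: if $G^{*}$ denotes the plane dual of $G$, then the medial graph of $G$ coincides with the medial graph of $G^{*}$. Indeed, a vertex of $\overline{G}_e$ is an edge $e$ of $G$, and its four facial neighbours are the edges consecutive to $e$ along the two faces bounding $e$, equivalently the two cyclic neighbours of $e$ at each of its two endpoints. Since the plane duality exchanges the roles of vertices and faces while keeping edges fixed, it preserves precisely this incidence pattern, and therefore $\overline{G}_e=\overline{G^{*}}_e$ as abstract graphs.

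Next I would use that $G$ is a triangulation: every face of $G$ is bounded by a triangle, so every vertex of $G^{*}$ has degree $3$, i.e.\ $G^{*}$ is a cubic plane graph. For a cubic plane graph the medial graph in fact agrees with the full line graph. At a vertex $v$ of degree $3$ the three incident edges occur consecutively in the rotation around $v$, so \emph{every} pair among them is facially adjacent; hence two edges of $G^{*}$ share a vertex if and only if they are facially adjacent, and restricting the line graph to facially adjacent pairs discards nothing. Thus $\overline{G^{*}}_e=(G^{*})_e$, the genuine line graph of $G^{*}$.

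Combining the two identifications yields $\overline{G}_e=(G^{*})_e$. It then remains only to verify that $G^{*}$ meets the hypotheses of Theorem~\ref{Theorem AT Cubic Planar}, that is, that $G^{*}$ is a $2$-connected cubic planar graph, and to invoke that theorem to obtain $\AT\bigl((G^{*})_e\bigr)\leqslant 3$, whence $\AT(\overline{G}_e)\leqslant 3$.

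The step demanding genuine care is the verification of those hypotheses. A simple triangulation on at least four vertices is a maximal planar graph and hence $3$-connected, so its dual $G^{*}$ is a $3$-connected (in particular $2$-connected and bridgeless) cubic plane graph, and the theorem applies directly. The only delicate point is the treatment of degenerate small cases, such as a single triangle, whose formal dual is a cubic multigraph falling outside the scope of Theorem~\ref{Theorem AT Cubic Planar}; in such cases $\overline{G}_e$ is itself a small graph (for the triangle it is $K_3$) whose Alon--Tarsi number is bounded by $3$ by direct inspection. Apart from this bookkeeping, the proof is a clean translation through plane duality, with all the real content supplied by Theorem~\ref{Theorem AT Cubic Planar}.
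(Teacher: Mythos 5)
Your proposal is correct and follows essentially the same route as the paper: pass to the plane dual $G^{*}$, observe that it is a bridgeless (indeed $2$-connected) cubic plane graph whose line graph is exactly $\overline{G}_e$, and invoke Theorem~\ref{Theorem AT Cubic Planar}. Your version merely spells out the identifications and the small degenerate cases more carefully than the paper's two-line argument.
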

\begin{proof} If $G$ is a triangulation, then the dual graph $G^*$ is cubic and also bridgeless. Since the line graph of $G^*$ is the same as $\overline{G}_e$, the assertion follows from Theorem \ref{Theorem AT Cubic Planar}.
	\end{proof}

\subsection{Facial total graphs}

Fabrici, Jendrol', and Voigt proved in \cite{FabriciJV} that $\ch (\overline{G}_{ve})\leqslant 6$ for every plane graph $G$. We extend this result by proving the same bound for the Alon-Tarsi number of this graph.

\begin{theorem}\label{Theorem AT Gve}
Every plane graph $G$ satisfies $\AT (\overline{G}_{ve})\leqslant 6.$
\end{theorem}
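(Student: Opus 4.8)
The plan is to produce a single non-vanishing monomial of $P_{\overline{G}_{ve}}$ in which every variable has degree at most $5$; since $\AT$ is the least $k$ admitting a non-vanishing monomial with all exponents $\le k-1$, this gives $\AT(\overline{G}_{ve})\le 6$. Recall $\overline{G}_{ve}=G_v\cup\overline{G}_e\cup B_{ve}$, so its vertex set is $V\cup E$ and its edge set partitions into the original edges, the medial edges, and the vertex--edge incidences; correspondingly $P_{\overline{G}_{ve}}=P_{G_v}\,P_{\overline{G}_e}\,P_{B_{ve}}$. I would read off the target monomial from a carefully chosen orientation $D^{*}$ of $\overline{G}_{ve}$, assembled from three pieces. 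First, orient every incidence edge of $B_{ve}$ \emph{from} its vertex \emph{towards} its edge, so that each $e\in E$ gains in-degree $2$ and each $v\in V$ gains in-degree $0$ from $B_{ve}$. Second, orient $G_v$ by an acyclic orientation of maximum in-degree at most $5$, which exists because $G_v$ is planar and hence $5$-degenerate. Third, orient $\overline{G}_e$ so as to realise the in-degree sequence $(r_e)$ of the non-vanishing medial monomial $M_2$ furnished by Corollary~\ref{Corollary Medial Brooks}, all of whose exponents satisfy $r_e\le 3$.

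Counting in-degrees in $D^{*}$, each original vertex receives at most $5$ (entirely from $G_v$, since the incidences contribute $0$), and each edge-vertex receives at most $2+3=5$ (two incidences plus at most three medial edges). Hence the monomial $M$ whose exponent at each vertex $x$ equals the in-degree of $x$ in $D^{*}$ has all exponents $\le 5$, and everything reduces to showing that $M$ is non-vanishing in $P_{\overline{G}_{ve}}$.

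The non-vanishing is the one real difficulty: $M$ occurs for many orientations sharing the in-degree sequence of $D^{*}$, and I must rule out cancellation of their signs. The key structural observation I would exploit is that, because every incidence edge points from $V$ into $E$ while there are no edges from $E$ back to $V$, and because $G_v$ is oriented acyclically, \emph{every directed cycle of $D^{*}$ is confined to the medial subgraph $\overline{G}_e$}. Now any orientation $D$ with the same in-degree sequence as $D^{*}$ differs from it on an edge set that, oriented according to $D^{*}$, has equal numbers of in- and out-flips at each vertex, hence is a disjoint union of directed cycles of $D^{*}$; by the confinement these lie entirely inside $\overline{G}_e$. Thus $D$ agrees with $D^{*}$ on all incidence and all $G_v$ edges, and the only freedom is to re-orient $\overline{G}_e$ within its in-degree sequence $(r_e)$. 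Summing the signs therefore collapses to $c_{P_{\overline{G}_{ve}}}(M)=\pm\,c_{P_{\overline{G}_e}}(M_2)$, which is nonzero by Corollary~\ref{Corollary Medial Brooks}.

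In the spirit of the splitting observation, this says that once the incidences are oriented one way and $G_v$ acyclically, the incidence and original factors become rigid: the incidence orientation is forced and contributes a single sign-definite term (just as in Proposition~\ref{Proposition Bipartite Polynomials}), the $G_v$ factor contributes $\pm1$ as in Proposition~\ref{Proposition AT Degenerate}, and all the flexibility of $M$ is carried by the medial factor alone. The main obstacle is precisely this non-vanishing step, and the confinement of directed cycles to $\overline{G}_e$ is exactly what dissolves it; it is also why the medial part must be handled last and via the Brooks-type bound of Corollary~\ref{Corollary Medial Brooks} rather than acyclically, since $\overline{G}_{ve}$ itself is not $5$-degenerate.
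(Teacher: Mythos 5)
Your proposal is correct, and it reaches the same monomial via the same decomposition $P_{\overline{G}_{ve}}=P_{G_v}P_{\overline{G}_e}P_{B_{ve}}$ with all $B_{ve}$ edges oriented into $E$, but it diverges from the paper in two genuine ways. First, for the $G_v$ factor the paper invokes Zhu's theorem (Theorem~\ref{Theorem Zhu}) to get vertex exponents at most $4$, whereas you use only an acyclic orientation coming from $5$-degeneracy, with vertex exponents at most $5$; since the original vertices receive nothing from $B_{ve}$ or $\overline{G}_e$, the weaker bound still lands at $5$, so your proof is more elementary on this point (though it would not survive the sharpening in Theorem~\ref{Theorem ve Bipartite Triangulation}, where the slack at the edge-vertices, not the original vertices, is what improves). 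Second, and more substantively, the non-vanishing step is argued differently: the paper shows that the multidegree of $M$ factors uniquely as a product of monomials from the three polynomial factors (a uniformity/degree-counting argument), which reduces $c(M)$ to the product of the three nonzero coefficients; you instead argue at the level of orientations, observing that the difference between any two orientations with the same in-degree sequence is an Eulerian subdigraph decomposing into directed cycles of $D^*$, and that acyclicity of the $G_v$ part together with the one-way orientation of $B_{ve}$ confines all such cycles to $\overline{G}_e$, collapsing $c(M)$ to $\pm c_{P_{\overline{G}_e}}(M_2)$. Both rigidity arguments are sound; the paper's is shorter and reused verbatim for Theorems~\ref{Theorem AT Gvef} and~\ref{Theorem Triangulation G_vf}, while yours makes the combinatorial mechanism (cycle reversals) explicit and explains structurally why only the medial factor retains any freedom. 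One cosmetic point: the cycles in the decomposition of the difference set are edge-disjoint rather than disjoint, but this does not affect the conclusion that no $B_{ve}$ or $G_v$ edge can lie in the difference set.
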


\begin{proof}Let us denote $V=\{x_1,x_2,\dots,x_n\}$ and $E=\{y_1,y_2,\dots,y_m\}.$
	Since 
	$$\overline{G}_{ve}=G_v\cup \overline{G}_e \cup B_{ve},$$ we have $$P_{\overline{G}_{ve}}=P_{G_v}P_{\overline{G}_e}P_{B_{ve}}.$$
	By Theorem \ref{Theorem Zhu} the polynomial $P_{G_v}$ has a non-vanishing monomial $$M_v=x_1^{a_1}x_2^{a_2}\cdots x_n^{a_n},$$ with $a_i\leqslant 4$ for all $i=1,2,\dots,n$.
	By Corollary \ref{Corollary Medial Brooks} the polynomial $P_{\overline{G}_e}$ has a non-vanishing monomial
	$$M_e=y_1^{b_1}y_2^{b_2}\cdots y_m^{b_m},$$ with $b_i\leqslant 3$ for all $i=1,2,\dots,m$.
	Also, by Proposition \ref{Proposition Bipartite Polynomials} the polynomial $P_{B_{ve}}$ contains a non-vanishing monomial
	$$N=y_1^2y_2^2\cdots y_m^2.$$
	Indeed, we may orient all edges of $B_{ve}$ from $V$ to $E$, and each vertex in $E$ will have in-degree exactly $2$. Then the monomial corresponding to this orientation is precisely $N$.
	
	It follows that the polynomial $P_{\overline{G}_{ve}}$ contains a monomial $M=M_vM_eN$ in its multiset of monomials $\mathcal{M}_{\overline{G}_{ve}}$. This monomial $M$ has the form
	$$M=x_1^{a_1}\cdots x_n^{a_n}y_1^{b_1+2}\cdots y_m^{b_m+2}.$$
	Clearly, the maximum degree of $M$ is at most $5$.
	
It remains to show that $M$ is non-vanishing in $P_{\overline{G}_{ve}}$. To this end assume that $M=XYQ$, where $X$, $Y$, and $Q$ are non-vanishing monomials in the corresponding factors $P_{G_v}$, $P_{G_{e}}$, and $P_{B_{ve}}$ of $P_{\overline{G}_{ve}}$. First, notice that we must have $X=M_v$. Indeed, if some variable $x_i$ appears in $X$ with degree $\deg_{x_i}(X)<a_i$, then there must be another variable $x_j$ with $\deg_{x_j}(X)>a_j$ (since $P_{G_v}$ is a uniform polynomial). Thus, $\deg_{x_j}(XYQ)>a_j$, which excludes equality $M=XYQ$. In consequence, we must have $M_eN=YQ$.

Now, we claim that also $Q=N$. If not, then there must exist some variable $y_i$ for which $\deg_{y_i}(Q)<2$, as the maximum value of $\deg_{y_i}(P_{B_{ve}})$ is $2$. But then there must appear some $x_i$ in $Q$ (again, by the uniformity of a graph polynomial). However, there are no $x_i$'s in $M_eN$. So, we must have $Q=N$, which together with $X=M_v$, implies that also $Y=M_e$. This completes the proof.
	\end{proof}

By the same argument we may obtain improved upper bounds in some special cases.

\begin{theorem}\label{Theorem ve Bipartite Triangulation}If $G$ is a plane bipartite graph or a triangulation, then $\AT (\overline{G}_{ve})\leqslant 5$.
\end{theorem}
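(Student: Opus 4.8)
The plan is to follow verbatim the structure of the proof of Theorem~\ref{Theorem AT Gve}, replacing exactly one ingredient. As there, write $V=\{x_1,\dots,x_n\}$ and $E=\{y_1,\dots,y_m\}$, and factor
$$P_{\overline{G}_{ve}}=P_{G_v}\,P_{\overline{G}_e}\,P_{B_{ve}}.$$
From Theorem~\ref{Theorem Zhu} I would take a non-vanishing monomial $M_v=x_1^{a_1}\cdots x_n^{a_n}$ in $P_{G_v}$ with every $a_i\leqslant 4$, and from Proposition~\ref{Proposition Bipartite Polynomials} the non-vanishing monomial $N=y_1^2\cdots y_m^2$ in $P_{B_{ve}}$ (coming from orienting $B_{ve}$ from $V$ to $E$). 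The single change concerns the medial factor: where the general argument invoked Corollary~\ref{Corollary Medial Brooks} for the weaker bound $\AT(\overline{G}_e)\leqslant 4$, here I would invoke Theorem~\ref{Theorem Medial Bipartite} when $G$ is bipartite and Corollary~\ref{Corollary Medial Triangulation} when $G$ is a triangulation. In both special cases these give $\AT(\overline{G}_e)\leqslant 3$, hence a non-vanishing monomial $M_e=y_1^{b_1}\cdots y_m^{b_m}$ in $P_{\overline{G}_e}$ with every $b_i\leqslant 2$.

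Forming $M=M_vM_eN$, which lies in $\mathcal{M}_{\overline{G}_{ve}}$, yields
$$M=x_1^{a_1}\cdots x_n^{a_n}\,y_1^{b_1+2}\cdots y_m^{b_m+2}.$$
Each $x$-variable then has degree at most $4$, while each $y$-variable has degree at most $b_i+2\leqslant 4$. So the maximum degree of $M$ is at most $4$, and the bound $\AT(\overline{G}_{ve})\leqslant 5$ will follow the moment $M$ is shown to be non-vanishing. Note that the improvement over Theorem~\ref{Theorem AT Gve} comes entirely from the $y$-variables, whose degree bound drops from $5$ to $4$; the crude bound $a_i\leqslant 4$ on the $x$-variables coming from Zhu's theorem is already good enough, so I would apply it uniformly in both cases rather than exploiting the sharper bipartite bound on $G_v$.

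For the non-vanishing step I would reuse the argument of Theorem~\ref{Theorem AT Gve} unchanged. Writing $M=XYQ$ with $X$, $Y$, $Q$ non-vanishing in $P_{G_v}$, $P_{\overline{G}_e}$, $P_{B_{ve}}$ respectively, uniformity of $P_{G_v}$ forces $X=M_v$ (a deficit in some $x_i$ would force an excess beyond $a_j$ in some $x_j$), after which the absence of $x$-variables in $M_eN$ together with $\deg_{y_i}(P_{B_{ve}})\leqslant 2$ forces $Q=N$ and therefore $Y=M_e$. I do not anticipate a genuine obstacle here: this argument uses only the uniformity of the factors and the structural fact that $B_{ve}$ contributes $y$-variables of maximal degree $2$, neither of which is disturbed by sharpening the medial bound from $4$ to $3$. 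The only point requiring verification is the legitimacy of that sharpening in each case, which is precisely the content of Theorem~\ref{Theorem Medial Bipartite} and Corollary~\ref{Corollary Medial Triangulation}.
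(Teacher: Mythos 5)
Your proposal matches the paper's proof essentially verbatim: the paper likewise reuses the notation and the non-vanishing argument from Theorem~\ref{Theorem AT Gve}, replacing only the bound on the medial factor by $\AT(\overline{G}_e)\leqslant 3$ via Theorem~\ref{Theorem Medial Bipartite} and Corollary~\ref{Corollary Medial Triangulation}, so that the maximum degree of $M=M_vM_eN$ drops to $4$. No differences worth noting.
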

 
\begin{proof} By Theorem \ref{Theorem Medial Bipartite} and Corollary \ref{Corollary Medial Triangulation}, in both cases we have $\AT (\overline{G}_e)\leqslant 3$. This means that the graph polynomial $P_{\overline{G}_e}$ contains a non-vanishing monomial $M_e=y_1^{b_1}y_2^{b_2}\cdots y_m^{b_m},$ with all $b_i\leqslant 2$. Using the same notation as in the previous proof, we get that the maximum degree of a variable in the monomial $M=M_vM_eN$ is at most $4$. Also, by the same argumentation, $M$ is non-vanishing in $P_{\overline{G}_{ve}}$, which completes the proof.
	\end{proof}

In much the same way we may obtain analogous results for the edge-face coloring. It is enough to substitute the graph $G_v$ with the graph $G_f$.

\begin{theorem}\label{Theorem Edge-Face Coloring}
Every plane graph $G$ satisfies $\AT (\overline{G}_{ef})\leqslant 6$. If $G$ is a triangulation or a bipartite plane graph, then $\AT (\overline{G}_{ef})\leqslant 5$.
\end{theorem}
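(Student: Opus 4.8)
The plan is to run the proofs of Theorems \ref{Theorem AT Gve} and \ref{Theorem ve Bipartite Triangulation} essentially verbatim, swapping the vertex-graph $G_v$ for the face-graph $G_f$ and the incidence graph $B_{ve}$ for $B_{ef}$. Writing $F=\{z_1,\dots,z_p\}$ and $E=\{y_1,\dots,y_m\}$, I would use the decomposition $\overline{G}_{ef}=G_f\cup\overline{G}_e\cup B_{ef}$ to factor $P_{\overline{G}_{ef}}=P_{G_f}P_{\overline{G}_e}P_{B_{ef}}$, and then build a low-degree monomial from non-vanishing factors of the three pieces.

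Two structural facts feed the argument. First, $G_f$ is planar: it is the underlying simple graph of the dual of $G$, hence (a simplification of) a plane graph. Thus Theorem \ref{Theorem Zhu} supplies a non-vanishing monomial $M_f=z_1^{c_1}\cdots z_p^{c_p}$ in $P_{G_f}$ with all $c_i\leqslant 4$. Second, since every face is bounded by a simple cycle, each edge of $G$ borders exactly two faces, so every edge-vertex $y_i$ has degree exactly $2$ in $B_{ef}$; orienting all edges of $B_{ef}$ from $F$ to $E$ then produces the non-vanishing monomial $N=y_1^2\cdots y_m^2$ (Proposition \ref{Proposition Bipartite Polynomials}), which crucially carries no face variables. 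Taking also a non-vanishing monomial $M_e=y_1^{b_1}\cdots y_m^{b_m}$ of $P_{\overline{G}_e}$ with all $b_i\leqslant 3$ (Corollary \ref{Corollary Medial Brooks}), the product $M=M_fM_eN$ belongs to $\mathcal{M}_{\overline{G}_{ef}}$ and has maximum variable degree at most $\max\{4,\,3+2\}=5$.

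The one step requiring care is the non-vanishing of $M$, but I expect it to transfer with no change, the face variables now playing the former role of the vertex variables. Writing $M=XYQ$ with $X,Y,Q$ non-vanishing in $P_{G_f}$, $P_{\overline{G}_e}$, $P_{B_{ef}}$ respectively, uniformity of $P_{G_f}$ forces $X=M_f$ exactly as in Theorem \ref{Theorem AT Gve}, so that $YQ=M_eN$ contains no face variables. If $Q\neq N$, then some $y_i$ would occur in $Q$ to degree below its maximum value $2$ in $P_{B_{ef}}$; uniformity of $P_{B_{ef}}$ would then push a face variable into $Q$, contradicting the absence of face variables in $M_eN$. Hence $Q=N$ and consequently $Y=M_e$, giving $\AT(\overline{G}_{ef})\leqslant 6$.

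For the bipartite and triangulation cases I would only replace the input $\AT(\overline{G}_e)\leqslant 4$ by the sharper $\AT(\overline{G}_e)\leqslant 3$ (Theorem \ref{Theorem Medial Bipartite} and Corollary \ref{Corollary Medial Triangulation}), so that all $b_i\leqslant 2$ and $M=M_fM_eN$ has maximum degree at most $\max\{4,\,2+2\}=4$; the identical non-vanishing argument then yields $\AT(\overline{G}_{ef})\leqslant 5$. The main obstacle is really just the bookkeeping of where the simple-cycle hypothesis enters: it is what makes $G_f$ a genuine simple planar graph (so that Zhu's bound applies) and what forces every edge-vertex of $B_{ef}$ to have degree exactly $2$ (so that $B_{ef}$ mimics $B_{ve}$). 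Once these two points are checked, the Combinatorial Nullstellensatz bookkeeping is identical to the vertex-edge case.
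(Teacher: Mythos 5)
Your proposal is correct and is exactly the argument the paper intends: the paper gives no separate proof of this theorem, stating only that one should repeat the proofs of Theorems \ref{Theorem AT Gve} and \ref{Theorem ve Bipartite Triangulation} with $G_v$ replaced by $G_f$ (and hence $B_{ve}$ by $B_{ef}$). You have filled in precisely that substitution, including the two points the paper leaves implicit --- planarity of $G_f$ so that Theorem \ref{Theorem Zhu} applies, and the degree-$2$ structure of $B_{ef}$ coming from the simple-cycle hypothesis --- so nothing further is needed.
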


\subsection{Facial entire graphs} Fabrici, Jendrol', and Voigt proved in \cite{FabriciJV} that every plane graph $G$ satisfies $\ch (\overline{G}_{vef})\leqslant 8$. We prove the following strengthening of this result.

\begin{theorem}\label{Theorem AT Gvef}
Every plane graph $G$ satisfies $\AT (\overline{G}_{vef})\leqslant 8$. 
\end{theorem}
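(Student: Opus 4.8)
The plan is to mimic the proof of Theorem~\ref{Theorem AT Gve}, splitting the edge set of $\overline{G}_{vef}$ into the six pieces
\[
\overline{G}_{vef}=G_v\cup\overline{G}_e\cup G_f\cup B_{ve}\cup B_{vf}\cup B_{ef},
\]
so that $P_{\overline{G}_{vef}}=P_{G_v}P_{\overline{G}_e}P_{G_f}P_{B_{ve}}P_{B_{vf}}P_{B_{ef}}$, and to combine one low-degree non-vanishing monomial from each factor. Writing $V=\{x_1,\dots,x_n\}$, $E=\{y_1,\dots,y_m\}$, $F=\{z_1,\dots,z_p\}$, I would take $M_v$ in $P_{G_v}$ with all $x$-degrees $\leqslant 4$ (Theorem~\ref{Theorem Zhu}); $M_f$ in $P_{G_f}$ with all $z$-degrees $\leqslant 4$ (Theorem~\ref{Theorem Zhu} again, since $G_f$ is a simple plane graph); $M_e$ in $P_{\overline{G}_e}$ with all $y$-degrees $\leqslant 3$ (Corollary~\ref{Corollary Medial Brooks}); the monomials $N_{ve}=\prod_jy_j^{2}$ and $N_{ef}=\prod_jy_j^{2}$ obtained by orienting $B_{ve}$ from $V$ to $E$ and $B_{ef}$ from $F$ to $E$ (each edge meeting exactly two vertices and two faces); and $N_{vf}$ in $P_{B_{vf}}$ with all $x$- and $z$-degrees $\leqslant 2$. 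The last choice is possible because $B_{vf}$ is the vertex--face incidence (radial) graph, hence planar and bipartite, so Corollary~\ref{Corollary AT Bipartite} supplies an orientation of in-degree at most $2$ on both sides. The product $M=M_vM_eM_fN_{ve}N_{ef}N_{vf}$ lies in $\mathcal{M}_{\overline{G}_{vef}}$ and its degrees add up exactly as intended: each vertex variable has degree at most $4+2=6$, each face variable at most $4+2=6$, and each edge variable at most $3+2+2=7$. Thus every variable of $M$ has degree at most $7$, and the whole theorem reduces to proving that $M$ is non-vanishing.

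For the non-vanishing step I would first dispose of the two incidence factors carrying edges, exactly as in Theorem~\ref{Theorem AT Gve}. In any factorisation $M=XYZUWT$ into non-vanishing monomials of $P_{G_v},P_{\overline{G}_e},P_{G_f},P_{B_{ve}},P_{B_{vf}},P_{B_{ef}}$, the variable $y_j$ satisfies $\deg_{y_j}(U)\leqslant 2$ and $\deg_{y_j}(T)\leqslant 2$, these being the maximal degrees of $y_j$ in $P_{B_{ve}}$ and $P_{B_{ef}}$. Hence $\deg_{y_j}(Y)\geqslant \deg_{y_j}(M_e)$ for every $j$, and summing over $j$ and using the uniformity of $P_{\overline{G}_e}$ forces equality everywhere: $Y=M_e$, $\deg_{y_j}(U)=\deg_{y_j}(T)=2$ for all $j$, and therefore $U=N_{ve}$ and $T=N_{ef}$. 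As these two monomials then carry no vertex- or face-variable, the coefficient of $M$ factorises as
\[
c_{P_{\overline{G}_{vef}}}(M)=c_{P_{\overline{G}_e}}(M_e)\,c_{P_{B_{ve}}}(N_{ve})\,c_{P_{B_{ef}}}(N_{ef})\,c_{P_{G_{vf}}}(M_vM_fN_{vf}),
\]
where $P_{G_{vf}}=P_{G_v}P_{G_f}P_{B_{vf}}$. The first three factors are non-zero, so the theorem comes down to the single statement that $M_vM_fN_{vf}$ is non-vanishing in $P_{G_{vf}}$; in other words, to the Alon--Tarsi analogue $\AT(G_{vf})\leqslant 7$ of the Wang--Lih bound $\ch(G_{vf})\leqslant 7$.

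This is the step I expect to be the genuine obstacle, and it is qualitatively different from the edge case. Here the bipartite factor $B_{vf}$ is \emph{not} extremal: a vertex lies on many faces and a face contains many vertices, so the degrees in $N_{vf}$ are far below the maximal degrees of $P_{B_{vf}}$ and the summation argument above collapses. Nor can the monomial be realised by a single low-in-degree acyclic orientation: for a large planar grid the interior of $G_{vf}$ induces a subgraph of minimum degree $8$, so $G_{vf}$ is not even $6$-degenerate and Proposition~\ref{Proposition AT Degenerate} is unavailable. I would therefore treat $\AT(G_{vf})\leqslant 7$ as a separate lemma. One concrete route is to fix the bipartite factor and use that, by Proposition~\ref{Proposition Bipartite Polynomials}, every monomial of $P_{B_{vf}}$ is non-vanishing with a sign depending only on the parity of its face-exponents; the coefficient $c_{P_{G_{vf}}}(M_vM_fN_{vf})$ then expands as a signed sum $\sum_W c_{P_{B_{vf}}}(W)\,c_{P_{G_v}}(X_W)\,c_{P_{G_f}}(Z_W)$ over the admissible bipartite monomials $W$, and the task is to show that these terms do not cancel. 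A cleaner alternative, and the one I would ultimately pursue, is to bypass the three-fold product by proving an unbalanced planar Alon--Tarsi bound for the planar graph $G_v\cup B_{vf}$ --- a non-vanishing monomial with in-degree at most $4$ on the vertex-nodes and at most $2$ on the independent set of face-nodes --- or else to run a Thomassen--Zhu style induction directly on $G_{vf}$; securing one of these is the crux of the argument.
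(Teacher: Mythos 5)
Your decomposition and the first half of your non-vanishing argument are sound: isolating the two incidence factors $B_{ve}$ and $B_{ef}$ via the extremal degree $2$ of each edge-variable, and forcing $Y=M_e$, $U=N_{ve}$, $T=N_{ef}$ by uniformity, is essentially the paper's own argument. But the proof as written has a genuine gap, and you have located it yourself: the non-vanishing of $M_vM_fN_{vf}$ in $P_{G_{vf}}$ is precisely the statement $\AT(G_{vf})\leqslant 7$, which the paper poses as an \emph{open conjecture} in its final section. None of the routes you sketch for it is carried out, so the argument does not close.

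The gap is avoidable because you are proving more than you need. Your edge-variables already reach degree $3+2+2=7$, so nothing is gained by pushing the vertex- and face-variables down to $6$; degree $7$ on them suffices for $\AT(\overline{G}_{vef})\leqslant 8$. This is exactly what the paper does: it treats $G_{vf}=G_v\cup G_f\cup B_{vf}$ as a \emph{single} factor, invokes the Wang--Lih fact that $G_{vf}$ is $7$-degenerate, and applies Proposition \ref{Proposition AT Degenerate} to obtain a non-vanishing monomial $M_{vf}$ of $P_{G_{vf}}$ with all degrees at most $7$ and coefficient $\pm1$ (an acyclic orientation with a given in-degree sequence being unique). Substituting this $M_{vf}$ for your $M_vM_fN_{vf}$, your own factorization argument goes through verbatim --- $X=M_{vf}$ is forced by uniformity of $P_{G_{vf}}$, the rest as you wrote --- and the theorem follows. (Incidentally, your side remark that for a large grid the interior of $G_{vf}$ ``induces a subgraph of minimum degree $8$'' is not right: any induced subgraph retains peripheral elements whose degree drops, which is how the $7$-degeneracy of $G_{vf}$ is consistent with interior degrees equal to $8$; so Proposition \ref{Proposition AT Degenerate} is in fact available at the level you actually need.)
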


\begin{proof} Let us denote $V=\{x_1,x_2,\dots,x_n\}$, $E=\{y_1,y_2,\dots,y_m\}$, and $F=\{z_1,z_2,\dots,z_r\}$. Recall that the graph $\overline{G}_{vef}$ is a union of the following graphs: 
	$$\overline{G}_{vef}=G_v\cup \overline{G}_e\cup G_f\cup B_{ve}\cup B_{vf}\cup B_{ef}.$$ But we may also write it as 
	$$\overline{G}_{vef}=G_{vf}\cup \overline{G}_e\cup B_{ve}\cup B_{ef}.$$
	This implies that 
	$$P_{\overline{G}_{vef}}=P_{G_{vf}}P_{\overline{G}_e}P_{B_{ve}}P_{B_{ef}}.$$
	It is well-known that $G_{vf}$ is $7$-degenerate (see \cite{WangLih}), hence, by Proposition \ref{Proposition AT Degenerate}, $P_{G_{vf}}$ contains a non-vanishing monomial:
		$$M_{vf}=x_1^{a_1}x_2^{a_2}\cdots x_n^{a_n}z_1^{c_1}z_2^{c_2}\cdots z_r^{c_r},$$
		with $a_i\leqslant 7$ and $c_j\leqslant 7$ for all $i=1,2,\dots, n$, and $j=1,2,\dots, r$. By Corollary \ref{Corollary Medial Brooks} the polynomial $P_{\overline{G}_e}$ contains a non-vanishing monomial:
		$$M_e=y_1^{b_1}y_2^{b_2}\cdots y_m^{b_m},$$
		 with $b_i\leqslant 3$ for all $i=1,2,\dots m$. Finally, by Proposition \ref{Proposition Bipartite Polynomials} each of the two polynomials $P_{B_{ve}}$ and $P_{B{ef}}$ contains a non-vanishing monomial:
		 $$N=y_1^2y_2^2\cdots y_m^2.$$
		 Indeed, arguing similarly as in the last proof, we may orient all edges of $B_{ve}$ and $B_{ef}$ from $V$ to $E$ and from $F$ to $E$, respectively. Then each vertex in $E$ will have in-degree exactly $2$ in each of these orientations. The monomial corresponding to each of these orientations is precisely $N$.
		 
		 Putting all of this stuff together we get a monomial $M=M_{vf}M_eN^2$ in the multiset of monomials $\mathcal{M}_{\overline{G}_{vef}}$, which can be written as:
		 $$M=x_1^{a_1}x_2^{a_2}\cdots x_n^{a_n}y_1^{b_1+4}y_2^{b_2+4}\cdots y_m^{b_m+4}z_1^{c_1}z_2^{c_2}\cdots z_r^{c_r}.$$
		 
		 It remains to prove that $M$ is non-vanishing in the polynomial $P_{\overline{G}_{vef}}$. We shall argue similarly as in the proof of Theorem \ref{Theorem AT Gve}, namely we will demonstrate that the factorization $M=M_{vf}M_eN^2$ into four monomials from the corresponding factors of $P_{\overline{G}_{vef}}$ is unique. For this purpose, assume that $M=XYQR$, where $X$, $Y$, $Q$ and $R$ are monomials form the corresponding polynomials $P_{G_{vf}}$,$P_{\overline{G}_e}$,$P_{B_{ve}}$, and $P_{B_{ef}}$, respectively. As before, $X$ must be equal to $M_{vf}$ since otherwise some variable in $X$, either $x_i$ or $z_j$ will have degree strictly bigger than $a_i$ or $c_j$, respectively. This implies that $M_eN^2=YQR$.
		 
		 We claim now that both monomials, $Q$ and $R$, must be the same as $N$. First notice that the maximum degree that variable $y_i$ may have in $Q$ or $R$ is $2$. If $\deg_{y_i}(Q)<2$ or $\deg_{y_i}(R)<2$, then some $x_j$ or $z_k$ must appear in $Q$ or $R$, respectively. But then we have either $\deg_{x_i}(M)>a_i$ or $\deg_{z_j}(M)> c_j$. So, $Q=R=N$ must hold, and in consequence $M_e=Y$. This completes the proof.
	\end{proof}

As before, we may improve the above upper bound by one in the case of triangulations. We will need the following strengthening of the result of Ringel \cite{Ringel}.

\begin{theorem}\label{Theorem Triangulation G_vf}
If $G$ is a plane triangulation, then $\AT (G_{vf})\leqslant 6$.
\end{theorem}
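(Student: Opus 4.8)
The plan is to reuse the splitting strategy of Theorems~\ref{Theorem AT Gve} and~\ref{Theorem AT Gvef}, but with an \emph{asymmetric} orientation of the vertex--face incidence graph. Writing $V=\{x_1,\dots,x_n\}$ and $F=\{z_1,\dots,z_r\}$, I would start from the decomposition $G_{vf}=G_v\cup G_f\cup B_{vf}$, so that $P_{G_{vf}}=P_{G_v}P_{G_f}P_{B_{vf}}$. The first observation to record is that for a triangulation the face graph $G_f$ is exactly the dual $G^*$, which is a simple \emph{cubic} planar graph: every face is a triangle, so every dual vertex has degree $3$. Provided $G\neq K_4$ (the unique triangulation whose dual is a clique), $G_f$ is neither a clique nor an odd cycle, so Theorem~\ref{Theorem AT Brooks} gives $\AT(G_f)\leq 3$, i.e.\ a non-vanishing monomial $M_f=z_1^{d_1}\cdots z_r^{d_r}$ with all $d_j\leq 2$. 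By Theorem~\ref{Theorem Zhu}, $P_{G_v}$ has a non-vanishing monomial $M_v=x_1^{a_1}\cdots x_n^{a_n}$ with all $a_i\leq 4$.

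The crucial step is the choice of monomial in $P_{B_{vf}}$. Because the vertex variables are already loaded up to degree $4$ by $M_v$, I would orient \emph{every} edge of $B_{vf}$ from its vertex endpoint toward its face endpoint. Each face, being a triangle, is incident to exactly three vertices, so this orientation yields in-degree $3$ at every face and in-degree $0$ at every vertex; by Proposition~\ref{Proposition Bipartite Polynomials} the corresponding monomial $N=z_1^3\cdots z_r^3$ is non-vanishing. Multiplying gives $M=M_vM_fN=x_1^{a_1}\cdots x_n^{a_n}\,z_1^{d_1+3}\cdots z_r^{d_r+3}\in\mathcal M_{G_{vf}}$, whose maximum variable degree is at most $\max\{4,\,2+3\}=5$. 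The whole point of this asymmetric orientation is that the face variables carry slack (degree at most $2$ from the cubic $G_f$) while the vertex variables have none, so all vertex--face incidences must be absorbed by the faces; a balanced orientation of $B_{vf}$ would instead push the vertices to degree $6$.

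It then remains to show $M$ is non-vanishing, which I would do exactly as in the proof of Theorem~\ref{Theorem AT Gvef} by proving the factorization $M=M_vM_fN$ is unique. Suppose $M=XYQ$ with $X,Y,Q$ monomials from $P_{G_v}$, $P_{G_f}$, $P_{B_{vf}}$; here $X$ uses only vertex variables, $Y$ only face variables, and $Q$ both. Since $P_{G_f}$ is uniform, $\deg(Y)=|E(G_f)|=\deg(M_f)$, so comparing total face-degrees in $M$ forces the face-degree of $Q$ to equal $3r$. As each $z_j$ has degree at most $3$ in $P_{B_{vf}}$ and there are $r$ faces, every $z_j$ must attain degree exactly $3$ in $Q$, whence $Q=N$. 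Then $XY=M_vM_f$, and matching the (disjoint) vertex- and face-parts gives $X=M_v$ and $Y=M_f$. Consequently $c_{P_{G_{vf}}}(M)$ is the single nonzero product of the coefficients of $M_v$, $M_f$, and $N$, so $M$ is non-vanishing and $\AT(G_{vf})\leq 6$.

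I expect the argument itself to be routine once the asymmetric orientation is identified; the only genuine obstacle is the exceptional triangulation $K_4$, where Theorem~\ref{Theorem AT Brooks} fails because the dual is a clique. This case would have to be handled by a direct computation on the fixed eight-vertex graph $(K_4)_{vf}$. I would also check at the outset that for every triangulation other than $K_4$ the dual $G_f$ is genuinely simple and cubic, so that the Brooks-type bound applies cleanly.
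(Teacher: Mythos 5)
Your proposal is correct and follows essentially the same route as the paper: the decomposition $G_{vf}=G_v\cup G_f\cup B_{vf}$, Zhu's theorem for $G_v$, the Brooks-type Theorem~\ref{Theorem AT Brooks} for the cubic dual $G_f$, and the orientation of $B_{vf}$ entirely from $V$ to $F$ giving $N=z_1^3\cdots z_r^3$, with the non-vanishing of $M=M_vM_fN$ established by uniqueness of the factorization (which you actually spell out more carefully than the paper does). The only difference is the exceptional case $G=K_4$, which you defer to a direct computation, whereas the paper disposes of it in one line by noting that $(K_4)_{vf}$ is $6$-regular and applying Theorem~\ref{Theorem AT Brooks} to it directly.
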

\begin{proof}Let us denote $V=\{x_1,x_2,\dots,x_n\}$, and $F=\{z_1,z_2,\dots,z_r\}$. Recall that $G_{vf}$ is a union of three graphs:
	$$G_{vf}=G_v\cup G_{f}\cup B_{vf}.$$
So, we have also
$$P_{G_{vf}}=P_{G_v}P_{G_f}P_{B_{vf}}.$$
By Theorem \ref{Theorem Zhu} the polynomial $P_{G_v}$ has a non-vanishing monomial
$$M_v=x_1^{a_1}x_2^{a_2}\cdots x_n^{a_n},$$
with $a_i\leqslant 4$ for all $i=1,2,\dots,n$. By the assumption that $G$ is a triangulation, the graph $G_f$ is cubic and by Theorem \ref{Theorem AT Brooks} its polynomial $P_{G_f}$ contains a non-vanishing monomial
$$M_f=z_1^2z_2^2\cdots z_r^2,$$
unless $G=K_4$, but in this case $G_{vf}$ is $6$-regular and we may apply Theorem \ref{Theorem AT Brooks} directly. Notice that all degrees in the monomial $M_f$ are exactly quadratic because their sum must be equal to the number of edges in $G_f$. Finally, the polynomial $P_{B_{vf}}$ contains a non-vanishing monomial
$$N=z_1^3z_2^3\cdots z_r^3,$$ corresponding to the orientation of $B_{vf}$ with all edges directed from $V$ to $F$. Thus, it follows that there is a monomial $M=M_vM_fN$ in $P_{G_{vf}}$ which can be written as
$$M=x_1^{a_1}x_2^{a_2}\cdots x_n^{a_n}z_1^5z_2^5\cdots z_r^5.$$Arguing as in the two previous proofs, we may convince ourselves that $M$ is non-vanishing in $P_{G_{vf}}$.
	\end{proof}

We may now apply the above result to the following theorem.

\begin{theorem}\label{Theorem Triangulation G_vef}
If $G$ is a plane triangulation, then $\AT (\overline{G}_{vef})\leqslant 7$.
\end{theorem}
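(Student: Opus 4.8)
The plan is to run the very same strategy as in the proof of Theorem~\ref{Theorem AT Gvef}, but to feed in the two sharper inputs that become available for triangulations. Using the edge-decomposition
$$\overline{G}_{vef}=G_{vf}\cup \overline{G}_e\cup B_{ve}\cup B_{ef},$$
I factor $P_{\overline{G}_{vef}}=P_{G_{vf}}P_{\overline{G}_e}P_{B_{ve}}P_{B_{ef}}$. For a triangulation, Theorem~\ref{Theorem Triangulation G_vf} replaces the crude $7$-degeneracy bound by $\AT(G_{vf})\leqslant 6$, so $P_{G_{vf}}$ carries a non-vanishing monomial $M_{vf}=x_1^{a_1}\cdots x_n^{a_n}z_1^{c_1}\cdots z_r^{c_r}$ with all $a_i,c_j\leqslant 5$. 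Likewise Corollary~\ref{Corollary Medial Triangulation} gives $\AT(\overline{G}_e)\leqslant 3$, hence a non-vanishing $M_e=y_1^{b_1}\cdots y_m^{b_m}$ with all $b_i\leqslant 2$. Finally Proposition~\ref{Proposition Bipartite Polynomials} supplies the monomial $N=y_1^2\cdots y_m^2$ in each of $P_{B_{ve}}$ and $P_{B_{ef}}$, obtained by orienting $B_{ve}$ from $V$ to $E$ and $B_{ef}$ from $F$ to $E$; each edge-variable then receives in-degree exactly $2$, since every edge of $G$ borders exactly two faces (all faces being simple cycles).

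Next I assemble the candidate monomial
$$M=M_{vf}M_eN^2=x_1^{a_1}\cdots x_n^{a_n}\,y_1^{b_1+4}\cdots y_m^{b_m+4}\,z_1^{c_1}\cdots z_r^{c_r},$$
which indeed appears in the multiset $\mathcal{M}_{\overline{G}_{vef}}$. Reading off the degrees, every $x_i$ has degree $a_i\leqslant 5$, every $z_j$ has degree $c_j\leqslant 5$, and every $y_i$ has degree $b_i+4\leqslant 6$. Thus the maximum degree of a variable in $M$ is at most $6$, so once $M$ is shown to be non-vanishing the Combinatorial Nullstellensatz yields $\AT(\overline{G}_{vef})\leqslant 7$.

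The remaining work, and the only step needing care, is to prove that $M$ is non-vanishing, which I would do exactly as in Theorem~\ref{Theorem AT Gvef} by showing that the factorization $M=XYQR$ over the four factors $P_{G_{vf}},P_{\overline{G}_e},P_{B_{ve}},P_{B_{ef}}$ is unique. The variables $x_i$ occur only in $X$ and $Q$, and the variables $z_j$ only in $X$ and $R$, so $\deg_{x_i}(X)\leqslant a_i$ and $\deg_{z_j}(X)\leqslant c_j$; combined with the uniformity of $P_{G_{vf}}$ these inequalities are forced to be equalities, giving $X=M_{vf}$ and therefore that $Q$ contains no $x$-variable and $R$ no $z$-variable. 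Since each edge-variable has degree at most $2$ in both $B_{ve}$ and $B_{ef}$, and $Q,R$ now carry only edge-variables, the uniformity of the two bipartite polynomials forces $Q=R=N$, whence $Y=M_e$. Uniqueness of this factorization means the coefficient of $M$ equals the product of the four nonzero factor-coefficients, so $M$ is non-vanishing and the bound follows. I expect no serious obstacle beyond this non-vanishing step; its only delicate point is that the in-degree cap of $2$ on edge-variables holds simultaneously in $B_{ve}$ and in $B_{ef}$, which is precisely where the hypothesis that all faces are simple cycles (so each edge meets exactly two faces) enters.
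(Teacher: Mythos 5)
Your proposal is correct and follows exactly the route the paper takes: the paper's own proof of this theorem simply points back to the proof of Theorem~\ref{Theorem AT Gvef} and substitutes the sharper inputs $\AT(G_{vf})\leqslant 6$ (Theorem~\ref{Theorem Triangulation G_vf}) and $\AT(\overline{G}_e)\leqslant 3$ for triangulations, obtaining maximum degree $6$ in the assembled monomial. Your fuller write-out of the uniqueness-of-factorization step matches the argument given for Theorem~\ref{Theorem AT Gvef}, so there is nothing to add.
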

\begin{proof}The reasoning is exactly the same as in the proof of Theorem \ref{Theorem AT Gvef}. The only difference is that now the maximum degree of a variable in the monomial $M_{vf}$ is at most $6$ (by Theorem \ref{Theorem Triangulation G_vf}), while in $M_e$ it is at most $2$ (by Theorem \ref{Theorem AT Cubic Planar}). Therefore the maximum degree in $M$ is at most $6$, which completes the proof.
	\end{proof}

\section{Graph polynomials and paintability}

The following game theoretic variant of the list coloring problem was introduced independently by Schauz and Zhu. Let $G$ be simple graph with $n$ vertices and let $p:V(G) \rightarrow \mathbb{N}$ be a function. We say that $G$ is \emph{$p$-paintable} if the following conditions hold:
\begin{itemize}
	\item[(1)] for every $v \in V(G)$ we have $p(v) > 0;$
	\item[(2)] for every nonempty $X \subseteq V(G)$ there exists an independent
	subset $X' \subseteq X$ for which $G-X'$ is $q$-paintable, where for $v \in V(G)-X'$
	we define $q(v)=p(v)-1$ if $v\in X-X'$, and $q(v)=p(v)$, otherwise.
\end{itemize} 

Notice that the above definition always refers to a simpler instance of itself, because either $G-X'$ has fewer than $n$ vertices or (if $X'=\emptyset$) $$\sum_{v \in V(G)} q(v) < \sum_{v \in V(G)} p(v).$$ The base case of paintability is $V(G)= \emptyset$, while the base case of non-paintability is $p(v) = 0$ for at least one vertex $v\in V(G)$.

Notice also that the above definition corresponds to the painting game described in the Introduction. Indeed, in each round Lister reveals a new color in lists of the vertices belonging to $X$, and Painter chooses the subset $X' \subseteq X$ of the vertices that will be colored by this color. The winner is Painter if the set of uncolored vertices eventually becomes empty, while Lister wins the game if at some moment an uncolored vertex $v$ has $p(v)=0$, meaning that no new color 
will ever be available at $v$.

The least integer $k$ such that a graph $G$ is $p$-paintable with $p(v)\geqslant k$ is denoted by $\p (G)$ and called the \emph{painting number} of $G$. Clearly every graph satisfies $$\ch (G)\leqslant \p (G).$$

In \cite{Schauz} Schauz proved a surprising result relating the painting number to graph polynomials. We will give a refined, purely algebraic proof of his theorem for completeness.

\begin{theorem}[Schauz \cite{Schauz}]\label{Theorem Schauz}
	If the graph polynomial
	$$
	P_G(x_1,x_2,\ldots, x_n)=\prod_{x_ix_j\in E(G)}\left(x_i-x_j\right)
	$$  contains a non-vanishing monomial of multidegree $(\alpha_1,\alpha_2,\ldots, \alpha_n)$,
	then $G$ is  $p$-paintable for $p(x_i)=\alpha_i+1$. In particular, every graph $G$ satisfies $$\p (G)\leqslant\AT (G).$$
\end{theorem}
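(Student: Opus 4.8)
The plan is to prove the slightly stronger statement that if $P_G$ has a non-vanishing monomial $M$ with $\deg_{x_i}(M)\le \alpha_i$ for every $i$, then $G$ is $p$-paintable for $p(x_i)=\alpha_i+1$; the bound $\p(G)\le\AT(G)$ then follows at once by taking all $\alpha_i=\AT(G)-1$. I would argue by induction on the potential $\Phi=\sum_{v}p(v)=\sum_i(\alpha_i+1)$, which is exactly the quantity that the recursive definition of paintability forces to strictly decrease in each round (either a vertex is deleted, or $X'=\emptyset$ and $\sum_v q(v)<\sum_v p(v)$). The base case $V(G)=\emptyset$ is immediate, since $P_G=1$ carries the empty monomial and the empty graph is vacuously paintable; condition (1) of the definition holds throughout because $\alpha_i+1\ge 1$.

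For the inductive step I must verify condition (2): given a nonempty $X\subseteq V(G)$, I need to exhibit an independent $X'\subseteq X$ together with a non-vanishing monomial $M'$ in $P_{G-X'}$ whose multidegree obeys $\deg_v(M')\le q(v)-1$, that is $\deg_v(M')\le\alpha_v-1$ for $v\in X\setminus X'$ and $\deg_v(M')\le\alpha_v$ for $v\notin X\cup X'$. Once such $X'$ and $M'$ are produced, the inductive hypothesis applied to $G-X'$ (whose potential is strictly smaller) shows that $G-X'$ is $q$-paintable, which is precisely what condition (2) demands.

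The algebraic engine is the factorization obtained by pulling out the edges meeting $X'$: since $X'$ is independent, $P_G=\pm\Big(\prod_{v\in X'}\prod_{w\sim v}(x_v-x_w)\Big)\,P_{G-X'}$, and extracting the coefficient of $M$ shows that some factorization $M=A\,B$, with $A$ a monomial of the first factor and $B$ a monomial of $P_{G-X'}$, must have both coefficients nonzero, so that $B$ is non-vanishing in $P_{G-X'}$. Writing $T=\{v\in X:\deg_v(M)=\alpha_v\}$ for the set of tight vertices, the non-tight vertices of $X$ already satisfy $\deg_v(B)\le\deg_v(M)\le\alpha_v-1$ and so may safely be kept; the tight vertices must either be colored (placed in $X'$) or made to lose a degree in $B$ through a colored neighbor. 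This dictates choosing $X'$ to be a maximal independent subset of $T$ (forcing into $X'$ every $v\in X$ with $\alpha_v=0$, which cannot otherwise be retained), so that every tight vertex left in $X\setminus X'$ is adjacent to some colored vertex and can absorb a degree from the corresponding factor $(x_u-x_v)$.

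The main obstacle is exactly this last point: I must guarantee that the \emph{particular} factorization $M=AB$ forcing a degree drop at every retained tight vertex survives, i.e. that the signed contributions of the \emph{good} monomials $A$ (those with $\deg_v(A)\ge 1$ for each retained tight $v$ and $\deg_v(A)=\deg_v(M)$ for $v\in X'$) do not cancel. I expect to control this by choosing $M$ extremally — among all admissible non-vanishing monomials, one minimizing $\sum_{v\in X}\deg_v(M)$ — so that no admissible monomial of $P_{G-X'}$ can be tight at a retained vertex, thereby ruling out the \emph{bad} terms that would leave a tight vertex undropped; equivalently one can phrase the cancellation question in the orientation model of Section 2 and exhibit a sign-reversing involution pairing the unwanted orientations of $G-X'$. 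Verifying that this extremal (or involutive) choice simultaneously keeps $X'$ independent, respects the degree bounds, and leaves a genuinely non-vanishing $M'$ is the delicate heart of the argument; everything else is bookkeeping driven by the factorization and the induction.
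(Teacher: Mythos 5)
Your reduction of the theorem to a one-round lemma (find an independent $X'\subseteq X$ such that $P_{G-X'}$ has a non-vanishing monomial whose degrees drop by one on $X\setminus X'$) matches the structure of the paper's argument, and the bookkeeping around the induction on $\sum_v p(v)$ is fine. But the proof has a genuine gap exactly where you flag it: you never establish that the ``good'' factorizations $M=AB$ --- those in which $A$ absorbs a degree at every retained tight vertex --- contribute a nonzero signed sum. Saying you ``expect to control this'' by choosing $M$ extremally or by exhibiting a sign-reversing involution is not a proof, and neither device is obviously available. Minimality of $\sum_{v\in X}\deg_v(M)$ over non-vanishing monomials of $P_G$ says nothing direct about the monomials of the \emph{different} polynomial $P_{G-X'}$, and no involution is described. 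Worse, your strategy commits to a specific $X'$ (a maximal independent subset of the tight set $T$) \emph{before} doing any algebra, and there is no reason such a combinatorially pre-selected $X'$ must be one for which the required non-vanishing monomial of $P_{G-X'}$ exists; the correct statement is only existential over $X'$.

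The paper's proof of the Lemma avoids this entirely by not choosing $X'$ in advance. It applies the operator $(I-\Pi_{x_1}^{<\alpha_1})\circ\cdots\circ(I-\Pi_{x_k}^{<\alpha_k})\circ\Pi_{y_1}^{\beta_1}\circ\cdots\circ\Pi_{y_l}^{\beta_l}$ to $P_G$ and evaluates at the all-ones point: by uniformity only the single monomial of multidegree $(\alpha_1,\ldots,\alpha_k,\beta_1,\ldots,\beta_l)$ survives, giving a nonzero constant. Expanding the $k$ factors $(I-\Pi)$ writes this constant as a sum of $2^k$ terms indexed by subsets of $X$, so some term is nonzero; \emph{that} subset is taken as $X'$, and the surviving term directly certifies a monomial of $P_G$ (hence of $P_{G-X'}$, after dividing out the edges meeting $X'$) with degree at most $\alpha_i-1$ on $X\setminus X'$ and exactly $\beta_i$ elsewhere. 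Independence of $X'$ falls out for free, since an edge inside $X'$ would force the factor $x_u-x_v$ to vanish under the evaluation at $1$. If you want to salvage your approach, you should replace the extremal/involution hope with an inclusion--exclusion argument of this kind (or prove an equivalent non-cancellation lemma); as written, the delicate heart of the argument is missing.
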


The theorem is a direct inductive consequence of the following Lemma, provided we assume $P_G \equiv 1$ for an edgeless graph $G$.

\begin{lemma}
	Let $X=\{x_1,x_2,\ldots,x_k\} \subseteq V(G)$ and $V(G)-X=\{y_1,y_2,\ldots,y_l\},$ where $k \geqslant 1$ and $l \geqslant 0.$ If $P_G$ contains a non-vanishing monomial of multidegree $(\alpha_1,\ldots, \alpha_k, \beta_1,\ldots,$ $ \beta_l),$ then there exists an independent subset  $X' \subseteq X,$ (we may assume that $X'=\{x_1,\ldots,x_j\}$, with $j \geqslant 0$), for which $P_{G-X'}$ contains a non-vanishing monomial of multidegree at most $(\alpha_{j+1}-1,\ldots, \alpha_k-1, \beta_1,\ldots, \beta_l).$
\end{lemma}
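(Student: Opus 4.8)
The plan is to prove this Lemma by analyzing how a fixed non-vanishing monomial in $P_G$ decomposes when we factor out the contribution of the edges incident to the revealed set $X$. The key algebraic fact I want to exploit is the one recorded just before Proposition~\ref{Proposition Bipartite Polynomials}: if we split the edge set $E(G)$ into two disjoint pieces, then $P_G$ factors as a product of the two corresponding polynomials. Here the natural split is to separate the edges lying entirely inside $V(G)-X$ from the rest. First I would isolate the structure of the monomial's coefficient by thinking of $P_G$ as a product $\prod_{x_i y \in E}(x_i - y)$ and tracking, for the given multidegree $(\alpha_1,\ldots,\alpha_k,\beta_1,\ldots,\beta_l)$, which variable is picked from each factor.

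The heart of the argument is a combinatorial-algebraic pigeonhole step reminiscent of the uniqueness arguments in the proofs of Theorems~\ref{Theorem AT Gve} and~\ref{Theorem AT Gvef}. Since the monomial is non-vanishing, its coefficient $c_{P_G}(M)$ is a nonzero sum of signs over copies of $M$ in $\mathcal{M}_G$. I would group these copies according to the independent set $X'\subseteq X$ consisting of those vertices of $X$ that receive their own symbol (equivalently, whose in-edges within $X$ all point away from them, so their degree in the monomial is ``reduced''). The goal is to show that for at least one choice of independent $X'$, the induced polynomial $P_{G-X'}$ carries a non-vanishing monomial of the asserted reduced multidegree $(\alpha_{j+1}-1,\ldots,\alpha_k-1,\beta_1,\ldots,\beta_l)$. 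The mechanism is that summing over all admissible configurations cannot cancel to zero simultaneously for every candidate $X'$, because $c_{P_G}(M)\neq 0$; so some surviving piece must contribute a genuinely non-vanishing monomial to the appropriate restriction.

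Concretely, I would set up a recursive reading of the expansion: each factor $(x_i - x_j)$ with $x_i \in X$ contributes either $x_i$ (coloring $x_i$, placing it outside $X'$) or $-x_j$; the vertices that end up uncolored by the current round constitute $X'$, and independence of $X'$ follows because if two adjacent vertices of $X$ were both uncolored, the edge between them would force one of them to absorb a factor, contradicting both being ``reduced.'' The lowering of each exponent $\alpha_i$ to $\alpha_i - 1$ for $x_i \in X\setminus X'$ reflects that exactly one incident factor has been consumed to record that vertex's color. I expect the main obstacle to be the bookkeeping that guarantees the reduced monomial in $P_{G-X'}$ is \emph{non-vanishing} rather than merely present: one must argue that the relevant partial sum of signs does not cancel. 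This is where the non-vanishing hypothesis on the original monomial must be leveraged carefully, most likely by observing that the full coefficient $c_{P_G}(M)$ splits as a sum indexed by the candidate sets $X'$, each summand being (up to sign) the coefficient of the reduced monomial in $P_{G-X'}$, so that $c_{P_G}(M)\neq 0$ forces at least one such coefficient to be nonzero.

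Once the Lemma is in hand, the Theorem follows by the indicated induction: starting from a non-vanishing monomial of multidegree $(\alpha_1,\ldots,\alpha_n)$ and setting $p(x_i)=\alpha_i+1$, each move by Lister (revealing a set $X$) is answered by taking the independent set $X'$ supplied by the Lemma, after which $G-X'$ still possesses a non-vanishing monomial with all exponents dropped appropriately, matching the definition of $q$-paintability with $q(v)=p(v)-1$ for $v\in X\setminus X'$. The base case is the edgeless graph with $P_G\equiv 1$, whose empty monomial is trivially non-vanishing, corresponding to $V(G)=\emptyset$ and hence paintability. Combining this with the uniform degree of $P_G$ yields $\p(G)\leqslant \AT(G)$.
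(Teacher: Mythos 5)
There is a genuine gap at the heart of the proposal. Your plan hinges on the claim that the coefficient $c_{P_G}(M)$ of the given monomial ``splits as a sum indexed by the candidate sets $X'$, each summand being (up to sign) the coefficient of the reduced monomial in $P_{G-X'}$.'' No such identity is established, and as stated it cannot hold: $c_{P_G}(M)$ is by definition a sum of signs over the copies of $M$ in $\mathcal{M}_G$, i.e.\ over orientations whose in-degree sequence is exactly $(\alpha_1,\ldots,\alpha_k,\beta_1,\ldots,\beta_l)$. In every one of these copies each $x_i$ has degree exactly $\alpha_i$, so no vertex of $X$ is ever ``reduced,'' and the proposed grouping of these copies by which vertices of $X$ receive their own symbol is vacuous. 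The information you actually need lives in the coefficients of \emph{other} monomials of $P_G$ --- those whose degree in some of the $x_i$ drops strictly below $\alpha_i$ --- and your argument never brings these into play. Relatedly, the conclusion of the Lemma asks for a non-vanishing monomial of multidegree \emph{at most} the stated bound, so for each candidate $X'$ there is no single ``reduced monomial'' whose coefficient could serve as the summand; one must control a whole family of monomials at once.

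The paper's proof supplies exactly the missing mechanism: it sets up an inclusion--exclusion identity by applying the operator $(I-\Pi_{x_1}^{<\alpha_1})\circ\cdots\circ(I-\Pi_{x_k}^{<\alpha_k})\circ\Pi_{y_1}^{\beta_1}\circ\cdots\circ\Pi_{y_l}^{\beta_l}$ to $P_G$ and evaluating at $(1,\ldots,1)$. Uniformity of $P_G$ guarantees that the left-hand side equals $c_{P_G}(M)\neq 0$, while expanding the product of $(I-\Pi_{x_i}^{<\alpha_i})$ expresses this number as a signed sum of $2^k$ terms, one per subset, each term being the sum of the coefficients of \emph{all} monomials with degree $<\alpha_i$ in the selected variables and exactly $\beta_i$ in the $y_i$. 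Non-cancellation then forces one of these $2^k$ terms to be nonzero, which yields a non-vanishing monomial of $P_G$ with the required degree bounds; passing to $P_{G-X'}$ uses the factorization $P_G=P_{G-X'}\cdot Q$ and the convolution formula for coefficients of a product, and independence of $X'$ follows from the observation that $x_ix_j\in E(G)$ makes $P_G$ vanish identically under the substitution $x_i=x_j=1$. Your sketch gestures at the right intuition (non-cancellation over candidate sets $X'$, independence via the factor $x_i-x_j$), but without the evaluation-at-ones / inclusion--exclusion step the pigeonhole you invoke has nothing to act on, so the proof as proposed does not go through.
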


\begin{proof}
	
	Let us define projection operators of the following two types acting on the vector space 
	$\mathbb{R} [x_1,\ldots,x_k,y_1,\ldots,y_l]$:
	\begin{itemize}
		\item  $\Pi_{x_i}^{<\alpha_i}$, whose image is spanned by the polynomials $M$
		satisfying $\deg_{x_i}(M)<\alpha_i$, and kernel by those $N$
		for which $\deg_{x_i}(N) \geqslant \alpha_i$;
		\item $\Pi_{y_i}^{\beta_i}$, whose image is spanned by the polynomials $M$
		satisfying $\deg_{y_i}(M) = \beta_i$, and kernel by those $N$
		for which $\deg_{y_i}(N) \not= \beta_i$.
	\end{itemize}
	
	Let us now consider the following projection:
	$$
	T=(I-\Pi_{x_1}^{<\alpha_1}) \circ \ldots \circ (I-\Pi_{x_k}^{<\alpha_k})
	\circ\Pi_{y_1}^{\beta_1}\circ
	\ldots \circ\Pi_{y_l}^{\beta_l}.
	$$
	We easily see that in $T(P_G)$ only the monomials $M$
	satisfying $\deg_{x_i}(M) \geqslant \alpha_i$ and $\deg_{y_i}(M) = \beta_i$
	for all $i$ will remain unannihilated. But as $P_G$ is uniform, 
	there is only one such monomial, namely the one of multidegree $(\alpha_1,\ldots, \alpha_k,
	\beta_1,\ldots, \beta_l),$ which does not vanish by the assumption of the lemma. Consequently, substituting all values $x_i$ and $y_i$ equal to $1$ will yield a non-zero
	constant:
	$$
	(I-\Pi_{x_1}^{<\alpha_1}) \circ \ldots \circ (I-\Pi_{x_k}^{<\alpha_k})
	\circ\Pi_{y_1}^{\beta_1}\circ
	\ldots \circ\Pi_{y_l}^{\beta_l} (P_G)(1,\ldots,1) \in \mathbb{R} - \{0\}.
	$$
	
	On the other hand, the above expression is, by linearity, a combination
	of $2^k$ expressions of the form
	$$
	\Pi_{x_{j+1}}^{<\alpha_{j+1}} \circ \ldots \circ \Pi_{x_k}^{<\alpha_k}
	\circ\Pi_{y_1}^{\beta_1}\circ
	\ldots \circ\Pi_{y_l}^{\beta_l} (P_G)(1,\ldots,1),
	$$
	corresponding to all $2^k$ subsets of $\{1,\ldots,k\}$. At least one of them
	has therefore to be non-zero, hence at least one of
	$$
	\Pi_{x_{j+1}}^{<\alpha_{j+1}} \circ \ldots \circ \Pi_{x_k}^{<\alpha_k}
	\circ\Pi_{y_1}^{\beta_1}\circ
	\ldots \circ\Pi_{y_l}^{\beta_l} (P_G)
	$$
	has to be a non-zero polynomial, which means that $P_G$ has degrees
	at most $(\alpha_{j+1}-1,\ldots, \alpha_k-1)$ in the variables 
	$(x_{j+1},\ldots, x_k)$, exactly 
	$(\beta_1,\ldots, \beta_l)$ in $(y_1,\ldots, y_l)$,
	and arbitrary in $(x_1,\ldots,x_j)$.
	
	Let us now see that $X'=\{x_1, \ldots,x_j\}$ satisfies the conditions of the lemma:
	\begin{itemize}
		\item
		as $P_G$ has a monomial of degrees at most
		$(\alpha_{j+1}-1,\ldots, \alpha_k-1,
		\beta_1,\ldots, \beta_l)$ in the variables 
		$(x_{j+1},\ldots, x_k,y_1,\ldots, y_l)$,
		and $P_G$ can be seen as the product of
		$P_{G-X'}$ and some polynomial Q (which happens to be the product of all $u-v$ 
		representing the edges $uv \in E(G)$ with at least one end in $X'$), 
		by the very definition of polynomial multiplication we see that
		$P_{G-X'}$ also has a polynomial of degree at most
		$(\alpha_{j+1}-1,\ldots, \alpha_k-1, \beta_1,\ldots, \beta_l)$;
		\item the set $X'$ is independent: indeed, if $j \geqslant 2$ and for instance 
		$x_1x_2 \in E(G),$ then $P_G$ contains the factor $x_1-x_2$, which implies
		$P_G(x_1=1,x_2=1) \equiv 0,$ contradicting
		$$
		\Pi_{x_{j+1}}^{<\alpha_{j+1}} \circ \ldots \circ \Pi_{x_k}^{<\alpha_k}
		\circ\Pi_{y_1}^{\beta_1}\circ
		\ldots \circ\Pi_{y_l}^{\beta_l} (P_G)(1,\ldots,1) \in \mathbb{R} - \{0\},
		$$
		because the  operator of evaluation at $x_1=1$ commutes with both $\Pi_{x_i}^{<\alpha_i}$ 
		for $i \not=1$ and $\Pi_{y_i}^{\beta_i}$.
		
	\end{itemize}
	
	This completes the proof. 
\end{proof}

By the above theorem and our previous results, we may state now the following conclusion for paintability of diversely combined plane graphs.

\begin{theorem}\label{Theorem Paintability}
	If $G$ is a plane graph, then
	\begin{itemize}
		\item[(i)] $\p (\overline{G}_{ve})\leqslant 6$ and $\p(\overline{G}_{vef})\leqslant 8$. 
	\end{itemize}
If $G$ is a plane triangulation, then
\begin{itemize}
	\item [(ii)] $\p (\overline{G}_{ve})\leqslant 5$, $\p (\overline{G}_{vef})\leqslant 7$, and $\p (G_{vf})\leqslant 6$.
\end{itemize}
\end{theorem}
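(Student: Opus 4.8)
The plan is to read off all six inequalities as immediate consequences of Schauz's theorem (Theorem \ref{Theorem Schauz}), which guarantees $\p(H)\leqslant\AT(H)$ for every graph $H$, combined with the upper bounds on the Alon-Tarsi number already established in the preceding section. In this sense the substantive work is finished: each paintability bound in the statement is obtained simply by matching its target value to a previously proved bound on the corresponding Alon-Tarsi number and then invoking the inequality $\p\leqslant\AT$.

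Concretely, for part (i) I would apply Theorem \ref{Theorem Schauz} to the bound $\AT(\overline{G}_{ve})\leqslant 6$ from Theorem \ref{Theorem AT Gve}, yielding $\p(\overline{G}_{ve})\leqslant 6$, and to the bound $\AT(\overline{G}_{vef})\leqslant 8$ from Theorem \ref{Theorem AT Gvef}, yielding $\p(\overline{G}_{vef})\leqslant 8$. For part (ii), under the hypothesis that $G$ is a plane triangulation, I would combine Theorem \ref{Theorem Schauz} with the sharper triangulation estimates: the bound $\AT(\overline{G}_{ve})\leqslant 5$ (the triangulation branch of Theorem \ref{Theorem ve Bipartite Triangulation}) gives $\p(\overline{G}_{ve})\leqslant 5$; the bound $\AT(\overline{G}_{vef})\leqslant 7$ (Theorem \ref{Theorem Triangulation G_vef}) gives $\p(\overline{G}_{vef})\leqslant 7$; and the bound $\AT(G_{vf})\leqslant 6$ (Theorem \ref{Theorem Triangulation G_vf}) gives $\p(G_{vf})\leqslant 6$.

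Since each step is a direct substitution into $\p\leqslant\AT$, I do not anticipate any genuine obstacle at this stage; the entire content of the statement is inherited from the earlier Alon-Tarsi estimates together with the non-trivial fact, proved via the projection-operator argument of the preceding Lemma, that a non-vanishing monomial of bounded multidegree already forces paintability with lists one larger than that multidegree. The only care required is bookkeeping: confirming that for each of the five combined graphs appearing in the statement the precise theorem supplying its Alon-Tarsi bound is cited, and noting that Theorem \ref{Theorem ve Bipartite Triangulation} is here invoked solely through its triangulation hypothesis rather than its bipartite alternative.
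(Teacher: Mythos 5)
Your proposal is correct and is exactly the paper's argument: the theorem is stated as an immediate corollary of Schauz's inequality $\p(H)\leqslant\AT(H)$ applied to the Alon--Tarsi bounds of Theorems \ref{Theorem AT Gve}, \ref{Theorem AT Gvef}, \ref{Theorem ve Bipartite Triangulation}, \ref{Theorem Triangulation G_vef}, and \ref{Theorem Triangulation G_vf}. Your citation bookkeeping matches the paper's intended sources for each of the five bounds.
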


Perhaps some of these bounds could be obtained in a purely combinatorial way, but at present we do not see an easy modification of the existing methods avoiding the use of graph polynomials.
  
\section{Discussion}

We conclude the paper with some remarks and open problems. A natural question is whether the obtained bounds for the Alon-Tarsi number are optimal. For instance, in \cite{WangLih} Wang and Lih proved that $\ch (G_{vf})\leqslant 7$ holds for any plane graph $G$. It is not known, however, if this bound is optimal. In view of the decomposition 
$$G_{vf}=G_v\cup G_f \cup B_{vf},$$
and the results of Zhu (Theorem \ref{Theorem Zhu}) and Alon and Tarsi (Corollary \ref{Corollary AT Bipartite}), we know that there are non-vanishing monomials in the corresponding graph polynomials whose product is a monomial in $\mathcal{M}_{G_{vf}}$ with maximum variable degree at most $6$. If we could prove that it is non-vanishing, then we would obtain the following extension of the above result for $\ch (G_{vf})$.

\begin{conjecture}
	Every plane graph $G$ satisfies $\AT (G_{vf})\leqslant 7$.
\end{conjecture}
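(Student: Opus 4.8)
The plan is to push the decomposition strategy of Theorem~\ref{Theorem AT Gve} and Theorem~\ref{Theorem AT Gvef} one step further, applying it to
\[
P_{G_{vf}}=P_{G_v}\,P_{G_f}\,P_{B_{vf}},
\]
and then to confront the non-vanishing of the resulting product monomial, which is the genuinely new difficulty. First I would choose a non-vanishing monomial $M_v=x_1^{a_1}\cdots x_n^{a_n}$ in $P_{G_v}$ with all $a_i\leqslant 4$ by Theorem~\ref{Theorem Zhu}, and a non-vanishing monomial $M_f=z_1^{c_1}\cdots z_r^{c_r}$ in $P_{G_f}$ with all $c_j\leqslant 4$, using that $G_f$ is itself a planar graph and invoking Theorem~\ref{Theorem Zhu} a second time. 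For the bipartite factor I would exploit that $B_{vf}$ is the vertex--face incidence (radial) graph of $G$, hence planar and bipartite; by Corollary~\ref{Corollary AT Bipartite} it then admits an orientation with all in-degrees at most $2$, which (via Proposition~\ref{Proposition Bipartite Polynomials}) yields a non-vanishing monomial $N=x_1^{r_1}\cdots x_n^{r_n}z_1^{s_1}\cdots z_r^{s_r}$ with every $r_i\leqslant 2$ and every $s_j\leqslant 2$. The product $M=M_vM_fN$ then belongs to $\mathcal{M}_{G_{vf}}$, and satisfies $\deg_{x_i}(M)\leqslant 4+2=6$ and $\deg_{z_j}(M)\leqslant 4+2=6$, so the bound $\AT(G_{vf})\leqslant 7$ would follow \emph{provided} $M$ is non-vanishing.

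The whole content of the conjecture therefore sits in the non-vanishing of $M$. Here the multiplicative shortcut that worked before breaks down. Since each factor involves its own (or overlapping) variables, the coefficient expands as
\[
c_{P_{G_{vf}}}(M)=\sum_{XYZ=M} c_{P_{G_v}}(X)\,c_{P_{G_f}}(Y)\,c_{P_{B_{vf}}}(Z),
\]
the sum ranging over all factorizations of $M$ into monomials $X$ of $P_{G_v}$, $Y$ of $P_{G_f}$, and $Z$ of $P_{B_{vf}}$. In the proofs of Theorem~\ref{Theorem AT Gve} and Theorem~\ref{Theorem AT Gvef} this sum collapsed to a single term, because the relevant bipartite graphs $B_{ve}$ and $B_{ef}$ have \emph{each edge incident to exactly two} vertices (resp. faces): the maximum degree of every edge-variable in $P_{B_{ve}}$ is exactly $2$, and the chosen monomial sat at that maximum, so any other factorization would overflow the degree budget on some vertex or face variable. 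This extremality forced a unique factorization and a nonzero coefficient of modulus $1$.

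For $B_{vf}$ this extremal structure is simply absent: the degree of a face-variable $z_j$ in $P_{B_{vf}}$ can be as large as the length of the face, and the degree of a vertex-variable $x_i$ as large as the number of faces around $x_i$, so the monomial $N$, capped at $2$ on both sides, is deep in the \emph{interior} of the range, not on its boundary. Consequently the factorization of $M$ is far from unique, several orientations of $B_{vf}$ share the in-degree sequence of $N$, and $X,Y$ may be traded against $Z$ in many ways; genuine cancellation among the signed terms is now possible and must be ruled out. To do this I would pursue one of two routes. The first is to choose the orientation of $B_{vf}$ \emph{adaptively} rather than uniformly: force as many vertex- and face-variables as possible to their extremal degree locally (e.g.\ at low-degree faces and vertices the orientation is essentially pinned), and then try to propagate this rigidity so that the factorization is at least ``almost unique,'' with the few remaining freedoms contributing with a common sign. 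The second, probably cleaner, route is to abandon the naive product of coefficients and instead reformulate everything through Alon--Tarsi orientations, constructing directly an orientation of $G_{vf}$ with all in-degrees at most $6$ for which the number of even spanning Eulerian subdigraphs differs from the number of odd ones, using Zhu's inductive ``twist'' (the mechanism behind Theorem~\ref{Theorem Zhu}) as a template and tracking the even--odd balance across the three pieces rather than multiplying their coefficients blindly.

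The main obstacle, in either formulation, is exactly the control of cancellation: proving that the even and odd Eulerian subdigraph counts of the combined orientation do not coincide, equivalently that $c_{P_{G_{vf}}}(M)\neq 0$. The in-degree bookkeeping that delivers the value $6$ is routine and already essentially recorded in the Discussion; what is missing, and what an honest proof must supply, is a structural reason why the interior bipartite monomial $N$ cannot induce a vanishing sum. I expect this step to require a new idea specific to the vertex--face incidence structure of plane graphs, since the uniqueness argument available for $B_{ve}$ and $B_{ef}$ provably does not transfer to $B_{vf}$.
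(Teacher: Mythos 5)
This statement is posed in the paper as a \emph{conjecture}, not a theorem: the Discussion section carries out exactly the degree bookkeeping you describe (the decomposition $G_{vf}=G_v\cup G_f\cup B_{vf}$, Zhu's bound for $P_{G_v}$, Alon--Tarsi for the planar bipartite $B_{vf}$, yielding a product monomial of maximum degree $6$ in $\mathcal{M}_{G_{vf}}$) and then explicitly states that the missing ingredient is a proof that this monomial is non-vanishing. Your proposal reproduces that reduction faithfully and correctly diagnoses why the uniqueness-of-factorization trick from Theorems~\ref{Theorem AT Gve} and~\ref{Theorem AT Gvef} breaks down: in $B_{ve}$ and $B_{ef}$ every edge-variable is forced to its maximal degree $2$, pinning the factorization, whereas in $B_{vf}$ the chosen degrees sit strictly inside the feasible range, so many factorizations $XYZ=M$ contribute and cancellation is possible. (One small refinement worth noting: you take $M_f$ with exponents $c_j\leqslant 4$ from Zhu's theorem applied to $G_f$, while the paper's bookkeeping is the same; either way the cap is $6$.)

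The genuine gap is that you never establish $c_{P_{G_{vf}}}(M)\neq 0$. Both of your proposed routes --- adaptively rigidifying the orientation of $B_{vf}$ to force near-uniqueness of the factorization, or recasting the problem in terms of even versus odd Eulerian subdigraph counts of a combined orientation of $G_{vf}$ --- are programmes, not arguments: neither is carried out even in a special case, and no mechanism is exhibited that would prevent the signed sum over factorizations from vanishing. Since this single step is the entire content of the conjecture (the surrounding computation being routine and already in the paper), the proposal does not constitute a proof; it is an accurate restatement of why the problem is open.
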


This conjecture, if true, would in turn imply the following improvement of Theorem \ref{Theorem AT Gvef}.

\begin{conjecture}
	Every plane graph $G$ satisfies $\AT (\overline{G}_{vef})\leqslant 7$.
\end{conjecture}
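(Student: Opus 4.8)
To establish the implication asserted above---that $\AT(G_{vf})\leqslant 7$ would yield $\AT(\overline{G}_{vef})\leqslant 7$---I would follow the template of Theorem~\ref{Theorem Triangulation G_vef}, but feed in the preceding conjecture in place of its triangulation-specific inputs. Starting from the decomposition
$$\overline{G}_{vef}=G_{vf}\cup \overline{G}_e\cup B_{ve}\cup B_{ef},$$
we have $P_{\overline{G}_{vef}}=P_{G_{vf}}P_{\overline{G}_e}P_{B_{ve}}P_{B_{ef}}$. Granting $\AT(G_{vf})\leqslant 7$, the factor $P_{G_{vf}}$ supplies a non-vanishing monomial $M_{vf}=x_1^{a_1}\cdots x_n^{a_n}z_1^{c_1}\cdots z_r^{c_r}$ with all $a_i,c_j\leqslant 6$, which disposes of the vertex and face variables at once. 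The whole difficulty therefore collapses onto the edge variables $y_i$.

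This is exactly where the obstruction sits. In this decomposition each $y_i$ accrues its degree from three places: the medial factor $P_{\overline{G}_e}$, where Corollary~\ref{Corollary Medial Brooks} only furnishes a non-vanishing monomial $M_e=\prod y_i^{b_i}$ with $b_i\leqslant 3$, together with the two bipartite factors, each contributing $2$ when oriented entirely towards $E$ (the monomial $N=y_1^2\cdots y_m^2$). Taking $M=M_{vf}M_eN^2$ gives $\deg_{y_i}(M)\leqslant 3+2+2=7$, precisely the value behind Theorem~\ref{Theorem AT Gvef}. In the triangulation case the missing unit is recovered from $\AT(\overline{G}_e)\leqslant 3$ (Corollary~\ref{Corollary Medial Triangulation}), but for a general plane graph $\overline{G}_e$ may be an essentially arbitrary $4$-regular plane graph, and the bound $\AT(\overline{G}_e)\leqslant 3$ is available only in the special cases of Theorem~\ref{Theorem Medial Bipartite} and Corollary~\ref{Corollary Medial Triangulation}. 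So the preceding conjecture does \emph{not} by itself trim the edge degree to $6$; some genuinely new input is required.

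To recover the missing unit I would not try to lower the medial contribution but instead redistribute the bipartite one: at each edge $y_i$ whose medial degree $b_i$ equals $3$, reorient a single incident edge of $B_{ve}\cup B_{ef}$ away from $y_i$, so that $\deg_{y_i}(M)\leqslant b_i+3=6$ there, while $\deg_{y_i}(M)\leqslant b_i+4\leqslant 6$ automatically when $b_i\leqslant 2$. Each reorientation deposits one unit of in-degree on an incident vertex $x_j$ or face $z_k$, and the aim is to absorb these deposits into slack of $M_{vf}$ (sites where $a_j<6$ or $c_k<6$). The hard part is twofold. First, one must know that the conjectural monomial $M_{vf}$ can be selected with enough slack, positioned at the right vertices and faces; since $\AT(G_{vf})\leqslant 7$ is a global statement that says nothing about the \emph{location} of degree deficits, a sharpened local version of the previous conjecture, or an accompanying discharging argument on $G$, seems unavoidable. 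Second, and more seriously, once the bipartite orientations depart from the extremal $N$, the uniqueness-of-factorisation argument driving Theorem~\ref{Theorem AT Gvef} breaks down: the monomials drawn from $P_{B_{ve}},P_{B_{ef}}$ may now carry $x$- and $z$-variables, so the factorisation $M=M_{vf}M_eN^2$ is no longer forced. Re-establishing that the redistributed monomial is non-vanishing---most plausibly by a direct sign analysis of the orientations of the whole graph $\overline{G}_{vef}$ that realise it, rather than factor by factor---is where I expect the real work, and the true obstacle, to lie.
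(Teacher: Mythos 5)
The statement you were asked about is one of the paper's open conjectures: the paper contains no proof of it, only the remark that it would follow from the preceding conjecture that $\AT(G_{vf})\leqslant 7$. Your proposal, accordingly, is not a proof and does not claim to be one. What it does contain is a correct and worthwhile observation: the asserted implication does not follow from the template of Theorems \ref{Theorem AT Gvef} and \ref{Theorem Triangulation G_vef}. In the decomposition $\overline{G}_{vef}=G_{vf}\cup\overline{G}_e\cup B_{ve}\cup B_{ef}$ the binding constraint sits on the edge variables, which receive degree $b_i+4\leqslant 3+2+2=7$ no matter how good a monomial $P_{G_{vf}}$ supplies; lowering the $x$- and $z$-degrees from $7$ to $6$ therefore leaves the maximum degree of $M=M_{vf}M_eN^2$ at $7$ and yields only $\AT(\overline{G}_{vef})\leqslant 8$ again. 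In the triangulation case the missing unit comes from $\AT(\overline{G}_e)\leqslant 3$ (Corollary \ref{Corollary Medial Triangulation}), not from the improvement on $G_{vf}$, and no such bound on medial graphs is available in general. So either the authors have a different argument in mind for the claimed implication, or it requires an additional hypothesis; your diagnosis of this gap is accurate.

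Your proposed repair --- redistributing two units of bipartite in-degree away from each edge with $b_i=3$ and absorbing them into slack of $M_{vf}$ --- is a reasonable direction, and you correctly identify the two genuine obstacles: one needs local control over \emph{where} the degree deficits of the conjectural monomial $M_{vf}$ are located, which a bare bound $\AT(G_{vf})\leqslant 7$ does not provide; and once the monomials drawn from $P_{B_{ve}}$ and $P_{B_{ef}}$ are allowed to involve $x$- and $z$-variables, the uniqueness-of-factorisation argument that makes $M$ non-vanishing in Theorems \ref{Theorem AT Gve} and \ref{Theorem AT Gvef} collapses, so non-vanishing would have to be established by a genuinely different (e.g.\ global sign-counting) argument. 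Neither obstacle is resolved in your write-up, so the conjecture remains open; but as an analysis of why the paper's conditional claim is not automatic, your proposal is sound.
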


It is not known if the bound $\ch (\overline{G}_{vef})\leqslant 8$, established in \cite{FabriciJV} by Fabrici, Jendrol', and Voigt, is tight in general.

In principle, one may try to apply the polynomial method to any existing graph coloring problem. For instance, let us look at another famous challenge posed by Ringel \cite{Ringel EarthMoon} concerning the chromatic number of \emph{Earth-Moon graphs}, that is, graphs of the form $G=G_1\cup G_2$, where both subgraphs, $G_1$ and $G_2$, are planar. By the result of Zhu (Theorem \ref{Theorem Zhu}) we know that each of the graph polynomials, $P_{G_1}$ and $P_{G_2}$, contains a non-vanishing monomial of maximum degree at most $4$. Since $P_G=P_{G_1}P_{G_2}$, the graph polynomial $P_G$ contains a monomial with maximum degree at most $8$ (in the multiset of monomials $\mathcal{M}_G$). If we could prove that it is non-vanishing, then we would get the following statement.

\begin{conjecture}
	Every Earth-Moon graph $G$ satisfies $\AT (G)\leqslant 9$.
\end{conjecture}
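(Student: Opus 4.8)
The plan is to mimic the decomposition strategy used in the proofs of Theorems~\ref{Theorem AT Gve} and~\ref{Theorem AT Gvef}. Writing $V(G)=\{x_1,\dots,x_n\}$ and using the edge decomposition $E(G)=E(G_1)\cup E(G_2)$, we factor
$$P_G=P_{G_1}P_{G_2}.$$
By Theorem~\ref{Theorem Zhu}, each polynomial $P_{G_i}$ contains a non-vanishing monomial of maximum variable degree at most $4$; call them $M_1=x_1^{\alpha_1}\cdots x_n^{\alpha_n}$ and $M_2=x_1^{\beta_1}\cdots x_n^{\beta_n}$, with all $\alpha_i,\beta_i\leqslant 4$. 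Their product $M=M_1M_2=x_1^{\alpha_1+\beta_1}\cdots x_n^{\alpha_n+\beta_n}$ then lies in the multiset $\mathcal{M}_G$ and has maximum variable degree at most $8$. Since $P_G$ is uniform, proving that $M$ is non-vanishing in $P_G$ would immediately yield $\AT(G)\leqslant 9$.

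The main obstacle is exactly the step that came for free in the earlier theorems. There the factors were supported on essentially disjoint sets of variables (vertices, edges, faces), so the factorization of the product monomial was forced to be unique and non-vanishing followed at once. Here, however, $G_1$ and $G_2$ live on the \emph{same} vertex set $V(G)$, and the coefficient of $M$ is the convolution
$$c_{P_G}(M)=\sum_{\mu\nu=M}c_{P_{G_1}}(\mu)\,c_{P_{G_2}}(\nu),$$
where $\mu$ ranges over monomials of $\mathcal{M}_{G_1}$ and $\nu$ over monomials of $\mathcal{M}_{G_2}$ with $\mu\nu=M$ (equivalently, over all splittings $\alpha_i+\beta_i$ of each exponent that are compatible with the vertex degrees in $G_1$ and $G_2$). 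These terms may carry opposite signs, so the sum could in principle vanish. Ruling out this cancellation is the crux, and is precisely why the statement remains a conjecture.

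I see three lines of attack. First, one might try to choose $M_1$ and $M_2$ so rigidly that the only compatible splitting of $M$ is the trivial one $(\mu,\nu)=(M_1,M_2)$; this works in the earlier proofs because some exponents are extremal, but Zhu's monomials are of small degree rather than maximal, so forcing uniqueness at every vertex seems out of reach. Second, and more promisingly, one could recast the problem directly through the Alon--Tarsi orientation interpretation: seek a single orientation of $G$ with maximum in-degree at most $8$ in which the numbers of even and odd Eulerian sub-digraphs differ, and build it from orientations of the two planar layers $G_1$ and $G_2$. Here the density of Earth--Moon graphs (which have up to $6n-12$ edges) already forces average in-degree near $6$, so the in-degree budget of $8$ is plausible; the difficulty then migrates to controlling the even/odd count of the combined orientation. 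Third, one might look for a strengthening of Theorem~\ref{Theorem Zhu} producing monomials whose coefficients obey a fixed sign pattern (for example, a definite sign after a suitable change of variables, in the spirit of Proposition~\ref{Proposition Bipartite Polynomials}), so that the convolution above consists of like-signed terms and cannot cancel. I expect the controlled-cancellation step, in whichever form, to be the genuine hard part; the decomposition and the degree bookkeeping are routine.
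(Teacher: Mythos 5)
The statement you were asked to prove is, in the paper, an open \emph{conjecture}, not a theorem: the authors present exactly the setup you describe --- the factorization $P_G=P_{G_1}P_{G_2}$, Zhu's Theorem~\ref{Theorem Zhu} supplying a non-vanishing monomial of maximum variable degree at most $4$ in each factor, and hence a product monomial $M$ of maximum degree at most $8$ in the multiset $\mathcal{M}_G$ --- and then state explicitly that the only missing step is showing $M$ is non-vanishing. Your write-up reproduces this reasoning faithfully and, to your credit, diagnoses precisely why the technique of Theorems~\ref{Theorem AT Gve} and~\ref{Theorem AT Gvef} does not transfer: in those proofs the factor polynomials are supported on (nearly) disjoint variable sets, or one factor has extremal exponents, so the factorization of the product monomial is forced to be unique and no cancellation can occur; here $G_1$ and $G_2$ share the entire vertex set, the coefficient of $M$ is a signed convolution $\sum_{\mu\nu=M}c_{P_{G_1}}(\mu)\,c_{P_{G_2}}(\nu)$, and nothing rules out cancellation. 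You do not close this gap, and you say so yourself.

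So the verdict is that this is not a proof: the genuine missing idea is exactly the non-cancellation argument, which is the entire content of the conjecture. Your three proposed lines of attack (forcing a unique splitting, working with Alon--Tarsi orientations of the combined graph, or strengthening Theorem~\ref{Theorem Zhu} to produce sign-controlled monomials in the spirit of Proposition~\ref{Proposition Bipartite Polynomials}) are sensible directions and consistent with how the authors frame the problem, but none is carried out, and none is known to work. The only unconditional bound currently available is $\AT(G)\leqslant 12$ via $11$-degeneracy and Proposition~\ref{Proposition AT Degenerate}, as the paper notes. In short: your analysis agrees with the paper's own discussion, but the statement remains unproven both in your proposal and in the paper.
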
 

In 1973 Sulanke found examples of Earth-Moon graphs with $\chi(G)=9$ (see \cite{Gardner}), which gives currently best lower bound on the maximum chromatic number in this class. On the other hand, every Earth-Moon graph is $11$-degenerate and therefore satisfies $\chi(G)\leqslant12$, which is a currently best upper bound in this problem. However, by Proposition \ref{Proposition AT Degenerate} we also have $\AT (G)\leqslant 12$, so, any improvement of this bound would simultaneously make progress in the original Earth-Moon problem.

\end{document}